\newtheorem{thm}{Theorem}
\newtheorem{lem}[thm]{Lemma}
\newtheorem{prop}[thm]{Proposition}
\newtheorem{cor}[thm]{Corollary}
\theoremstyle{remark}
\newtheorem{rmk}[thm]{Remark}
\newtheorem{example}[thm]{Example}
\theoremstyle{definition}
\newtheorem{defi}[thm]{Definition}
\numberwithin{thm}{section} 
\numberwithin{equation}{section}
\newcommand{\Rmnum}[1]{\expandafter\@slowromancap\romannumeral #1@}
\def\L{{\mathcal L}}
\def\R{{\mathbb R}}
\def\M{{\mathcal M}}
\def\S{{\mathbb S}}
\def\O{{\mathcal O}}
\newcommand{\pO}{\partial\Omega}
\newcommand{\Oba}{\overline{\Omega}}
\newcommand{\vep}{\varepsilon}
\newcommand{\ol}{\overline}
\newcommand{\dive}{\operatorname{div}}
\newcommand{\tr}{\operatorname{tr}}
\newcommand{\sgn}{\operatorname{sgn}}
\newcommand{\bpm}{\begin{pmatrix}}
\newcommand{\epm}{\end{pmatrix}}
\newcommand{\la}{\langle}
\newcommand{\ra}{\rangle}
\def\dist{{\rm dist\,}}
\title[Parabolic Minkowski Convolution]{\protect{Parabolic Minkowski convolutions of viscosity solutions to fully nonlinear equations}}
\author[K. Ishige]{Kazuhiro Ishige}
\author[Q. Liu]{Qing Liu}
\author[P. Salani]{Paolo Salani}
\address[K. Ishige]{Graduate School of Mathematical Sciences, 
University of Tokyo 
3-8-1 Komaba, Meguro-ku, Tokyo, 153-8914, Japan. \textit{E-mail address: }{\tt ishige@ms.u-tokyo.ac.jp}}
\address[Q. Liu]{Department of Applied Mathematics, Faculty of Science, Fukuoka University, Fukuoka 814-0180, Japan. \textit{E-mail address: }{\tt qingliu@fukuoka-u.ac.jp}}
\address[P. Salani]{Dipartimento di Matematica``U. Dini”, Universit\`{a} di Firenze, viale Morgagni 67/A, 50134 Firenze, Italy. \textit{E-mail address: }{\tt paolo.salani@unifi.it}}
\date{\today}
\begin{document}

\begin{abstract}
This paper is concerned with the Minkowski convolution of viscosity solutions of fully nonlinear parabolic equations.  We adopt this convolution to compare viscosity solutions of initial-boundary value problems in different domains. As a consequence, we can for instance obtain parabolic power concavity of solutions to a general class of parabolic equations. Our results are applicable to the Pucci operator, the normalized $q$-Laplacians with $1<q\leq \infty$, the Finsler Laplacian, and more general quasilinear operators. 
\end{abstract}

\subjclass[2010]{
35D40. 
35K20, 
52A01. 
}
\keywords{power concavity, Minkowski addition, viscosity solutions, initial-boundary value problem}

\maketitle

\section{Introduction}

\subsection{Background and motivation}

This paper is connected to a general theory devised for the elliptic case in \cite{Salani1} and extended to the parabolic framework by two of the authors. 
In particular, we extend the results in \cite{IshSa2} and \cite{IshSa3} to a general class of fully nonlinear parabolic equations in the framework of viscosity solutions. In connection with the general theory of \cite{Salani1} and with the results and techniques of this paper, we also address the reader to the twin paper \cite{ILS2}, where we consider spatial concavity properties as well as Brunn-Minkowski type inequalities for parabolic and elliptic problems. 

Let us first describe the basic setting of our problem and introduce its background. 

Let $m\geq 2$ and $n\geq 1$. For any $i=1, 2, \ldots, m$, let $\Omega_i$ be a bounded smooth domain in $\R^n$. Let $\nu_i$ denote the inward unit normal vector to $\pO_i$.  For any 
\[
\lambda\in \Lambda_m=\left\{(\lambda_1, \ldots, \lambda_m)\in (0, 1)^m: \sum_{i=1}^m\lambda_i=1\right\},
\]
let $\Omega_\lambda$ be the Minkowski combination of $\Omega_i$, defined by 
\begin{equation}\label{minkowski domain}
\Omega_\lambda=\sum_{i=1}^m \lambda_i\Omega_i=\left\{\sum_{i=1}^m \lambda_i x_i: x_i\in \Omega_i,  i=1, 2, \ldots, m\right\}.
\end{equation}
It is easy to see that $\Omega_\lambda$ is bounded in $\R^n$.  
Notice that when $\Omega_i=\Omega$ for $i=1,\dots,m$, we have of course $\Omega\subseteq\Omega_\lambda$, but the inclusion is in general strict unless 
$\Omega$ is convex. Hereafter for simplicity we set $Q_i=\Omega_i\times (0, \infty)$ and $\partial Q_i=\left(\pO_i\times (0, \infty)\right)\cup \left(\Oba_i\times \{0\}\right)$ for $i=\lambda, 1, \ldots, m$. 
Our first aim is to connect the solution $u_\lambda$ of some Cauchy-Dirichlet problem in $\Omega_\lambda$ to the solutions $u_1,\dots,u_m$ of similar (but not necessarily the same) Cauchy-Dirichlet problems in $\Omega_1,\dots,\Omega_m$.

In particular, for $i=\lambda$ and $i=1, 2, \ldots, m$,
let us consider the following fully nonlinear Cauchy-Dirichlet problems:
\begin{numcases}{}
\partial_t u+F_i(x, t, u, \nabla u, \nabla^2 u)=0 &\text{in $Q_i$, }\label{general eq}\\
u=0 &\text{on $\partial Q_i$,}\label{bdry}
\end{numcases}
where $F_i: \overline{Q}_i\times [0, \infty)\times (\R^n\setminus\{0\})\times \S^n\to \R$ for $i=\lambda, 1, 2, \ldots, m$ are given continuous elliptic operators, with $F_\lambda$ suitably related to $F_1,\dots,F_m$.
As we said, we are interested in finding some kind of relationships (which we will clarify later) between the solution of problem \eqref{general eq}--\eqref{bdry} with $i=\lambda$ and the solutions with $i=1,\dots,m$.

Let $u_i$ be a positive solution of \eqref{general eq}--\eqref{bdry} in $Q_i$ for every $i=1, 2, \ldots, m$.  Let $1/2\leq \alpha\leq 1$ and $p<1$ be two given parameters and define the {\em $\alpha$-parabolic Minkowski $p$-convolution} of $\{u_i\}_{i=1}^m$ for any $\lambda\in \Lambda_m$ as follows:
\begin{equation}\label{envelope0}
\begin{aligned}
U_{p,\lambda}(x, t):=\sup\Bigg\{M_p  \left(u_1(x_1, t_1),\dots, u_m(x_m,t_m); \lambda\right) &:   (x_i, t_i)\in \overline{Q_i}, \\
&\hspace{-0.1cm} x=\sum_i \lambda_i x_i,\  t=\left(\sum_i \lambda_i t_i^\alpha\right)^{1\over \alpha}\Bigg\}.
\end{aligned}
\end{equation}
Here, for given $\lambda \in\Lambda_m$ and $p\in [-\infty,\,+\infty]$, 
$M_p(a_1,\dots,a_m;\lambda)$ denotes the usual weighted $p$-means (with weight $\lambda$) of $a=(a_1,\dots,a_m)\in[0,\infty)^m$, whose precise definition is given later in \eqref{pmeandef}.

As shown in \cite{IshSa3},  when the equations are semilinear with $F_i$  of the form
\begin{equation}\label{semilinear}
F_i(x, t, r, \xi, X)=-\tr X-f_i(x, t, r, \xi), \quad i=\lambda, 1, \ldots, m,
\end{equation}
then, under suitable assumptions on the behavior of the $u_i$'s on $\partial Q_i$'s, $U_{p,\lambda}$ is a subsolution of \eqref{general eq}--\eqref{bdry} with $i=\lambda$, provided that $f_\lambda$ and $\{f_i\}_{i=1}^m$ satisfy
\begin{equation}\label{nonlinearity cond0}
g_\lambda\left(\sum_i \lambda_i x_i, \sum_i \lambda_i t_i, \sum_i \lambda r_i, \xi\right)\geq \sum_{i=1}^m \lambda_i g_i (x_i, t_i, r_i, \xi)
\end{equation}
for any fixed $\xi\in \R^n$ and any $(x_i, t_i, r_i)\in Q_i\times (0, \infty)$, where
\begin{equation}\label{nonlinearity tran}
g_i(x, t, r, \xi)=r^{3-{1\over p}} f_i\left(x, t^{1\over \alpha}, r^{1\over p}, {1\over p}r^{{1\over p}-1}\xi\right), \quad i=\lambda, 1, \ldots, m.
\end{equation}
This, coupled with a comparison principle for \eqref{general eq},  results in a comparison between the solution of the problem in $\Omega_\lambda$ with the solutions in the $\Omega_i$'s, $i=1,\dots,m$, which consists in a sort of concavity principle for the solutions of the involved problems with respect to the Minkowski combination of the underlying domains. When the domains $\Omega_1,\dots,\Omega_m$ differ from each other, interesting applications are Brunn-Minkowski type inequalities for possibly connected functionals. 
For this, we refer to \cite{Salani1} and to the bibliography therein for the elliptic case and to \cite{IshSa3} for the parabolic case.

Notice that the condition \eqref{nonlinearity cond0} can be interpreted as a comparison relation between $f_\lambda$ and a certain type of concave combination of the $f_i$'s $(i=1, 2, \ldots, m)$ under the transformation \eqref{nonlinearity tran}.

When all the $\Omega_i$'s coincide with a convex domain $\Omega$ and all $f_i$ are the same for $i=\lambda, 1, \ldots, m$, all the problems clearly reduce to a single one. 
Then the above result, combined with a comparison principle for \eqref{general eq}--\eqref{bdry}, immediately implies that the unique solution $u$ of such an equation is $\alpha$-parabolically $p$-concave in the sense that
\begin{equation}\label{concavity}
u\left(\sum_i \lambda_i x_i, M_\alpha(t_1,\dots,t_m;\lambda)\right)\geq M_p\left(u_1(x_1, t_1)\dots,u_m(x_m,t_m);\lambda\right).
\end{equation}
This type of concavity results was established in \cite{IshSa2} and \cite{IshSa3} (see also \cite{IshSa1, IshSa2.1}). 
Note that \eqref{nonlinearity cond0} then turns into a concavity assumption for $g_\lambda$. 

When the $\Omega_i$'s truly differ from each other, then our result can be used to obtain Brunn-Minkowski type inequalities for related functionals, as it will be more explicitly described in \cite{ILS2} and has been already done in \cite{IshSa3} in the parabolic framework and similarly, suitably treating different specific cases, in \cite{CoSa, Co, CoCuSa, CoCu, salanima, LiuMaXu, Salani, BiSa} in the elliptic case. Notice that a general theory (for elliptic problems) is developed in \cite{Salani1}, where however only classical solutions and convex domains were considered, although all the results therein did not really need convexity of the involved domains. And indeed non convex domains have been explicitly treated in \cite{IshSa3}.

The purpose of this paper is to extend the results described above to a more general setting. Our generalization lies at the following three aspects. First, we study the problem for a general class of fully nonlinear parabolic equations, which certainly includes the known semilinear case. We even allow the equations to bear mild singularity caused by vanishing gradient. By ``mild singularity'' we mean that for each $i=\lambda, 1, \ldots, m$, 
there exists a continuous function $h_i: \ol{Q}_i\times [0, \infty)\to \R$ such that
\begin{equation}\label{f0}
h_i(x, t, r)=(F_i)_\ast(x, t, r, 0, 0)=(F_i)^\ast(x, t, r, 0, 0)\quad \text{for $(x, t, r)\in \ol{Q}_i\times [0, \infty)$},
\end{equation}
where $(F_i)_\ast$ and $(F_i)^\ast$ respectively stand for the lower and upper semicontinuous envelopes of $F_i$.
Our results are applicable to several important types of nonlinear operators including the Pucci operator, 
the normalized $q$-Laplacians $(1<q\leq \infty)$, and more general quasilinear operators. 

Secondly, in accordance to our generalization of the equations, another significant contribution of this paper is that we use the weaker notion of viscosity solutions rather than the classical solutions. We thus manage to reduce the $C^2$ regularity of the solutions in the main theorems of \cite{IshSa2, IshSa3}. Let us emphasize that it is indeed possible to investigate spatial convexity of solutions in the framework of viscosity theory; we refer to \cite{ES1, GGIS, ALL, Ju, LSZ} for viscosity techniques in different contexts and to \cite{Ke, Kor, KaBook, BianGu, IshSa0, IshSa0.5, INS1, INS2} etc for related results for classical solutions.
Our current work provides new results on parabolic power concavity of viscosity solutions, 
which are not considered in the aforementioned references  (but let us point out that, right after completing this work, we have learnt also about \cite{CraFra}, where viscosity solutions have been now considered to study Brunn-Minkowski type inequalities for the eigenvalues of fully nonlinear homogeneous elliptic operators).

Third, we allow more freedom to the parameters $\alpha$ and $p$, so that, depending on the involved operators, we can consider $\alpha\in[0,1]$ and $p\in(-\infty,1]$. Notice that, although there is no special difficulty, negative power concavity properties have not been explicitly treated before to our knowledge.

Throughout this paper we assume the following fundamental well-posedness results for any $i=\lambda, 1, \ldots, m$. 
\begin{itemize}
\item There exists a unique viscosity solution, locally Lipschitz in space,  to \eqref{general eq}--\eqref{bdry}.
\item The comparison principle holds for \eqref{general eq}--\eqref{bdry}, at least for $i=\lambda$; that is, if $u_\lambda$ and $v_\lambda$ are respectively an upper semicontinuous subsolution and a lower semicontinuous supersolution satisfying $u_\lambda\leq v_\lambda$ on $\partial Q_\lambda$, then $u_\lambda\leq v_\lambda$ in $\overline{Q}_\lambda$.  
\end{itemize}
We refer to \cite{CIL} and \cite{Gbook} for existence and uniqueness of viscosity solutions of \eqref{general eq}--\eqref{bdry}. For the reader's convenience, in Appendix (Section \ref{sec:app1}), we list more precise structure assumptions on the $F_\lambda$ besides \eqref{f0}, 
which guarantee the comparison principle; see more details also in \cite[Theorem 8.2]{CIL} and \cite[Theorem 3.6.1]{Gbook}. On the other hand, showing local Lipschitz regularity of the unique solution requires extra work and further assumptions on $F_i$. We refer to the extensive literature on this subject in the context of viscosity solutions, for example \cite{Ba0, Wang1, Wang2, KK2, BaSo, Ba3, Att} 
and references therein.

\subsection{Assumptions and main result}
Our main result is based on a condition connecting $F_\lambda$ and $F_i$ ($i=1, 2, \ldots, m$), which generalizes \eqref{nonlinearity cond0} in the fully nonlinear setting.  In order to give a clear view of this condition, we introduce the following transformed  operators with a parameter $k\in \R$.  Given $p<1$ and $\alpha\in(0,1)$, let $G_{i, k}^{p,\alpha}: \ol{Q}_i\times [0, \infty)\times (\R^n\setminus \{0\})\times \S^n\to\R$ be defined as follows for every $i=\lambda, 1, \ldots, m$:
\begin{equation}\label{operator-k}
\begin{array}{ll}
G_{i, k}^{p,\alpha}(x, t, r, \xi, X)=r^{k}  F_i\bigg(x,\ t^{1\over \alpha},\ r^{1\over p},\ {1\over p}r^{{1\over p}-1}\xi,\ {1\over p} r^{{1\over p}-1} X +{1-p\over p^2} r^{{1\over p}-2}   \xi\otimes \xi \bigg)&\text{if }p\neq 0\,,\\
G_{i, k}^{0,\alpha}(x, t, r, \xi, X)=e^{kr} F_i\bigg(x,\ t^{1\over \alpha},\ e^r,\ e^r\xi,\ e^r(X +  \xi\otimes \xi )\bigg)&\text{if }p= 0\,,
\end{array}
\end{equation}
for all $x, t, r, \xi, X\in \ol{Q}_i\times (0, \infty)\times (\R^n\setminus \{0\})\times \S^n$.

To apply our method, we need to find $k\in \R$ satisfying the following two key assumptions $\text{(H1)}$ and $\text{(H2)}$.
\begin{enumerate}
\item[$\text{(H1)}$] If $p\neq 0$, the parameter $k\in\R$ 
satisfies
\begin{equation}\label{par cond}
\text{ either } \quad {1\over p}-1+k\leq 0\quad \text{or}\quad \alpha\left({1\over p}-1+k\right)\geq 1.
\end{equation}

\item[$\text{(H2)}$] For any $\lambda\in \Lambda$ and any $(x, t)\in Q_\lambda$, $r\geq 0$, $\xi\in \R^n\setminus\{0\}$, and $Y\in \S^n$, 
\begin{equation}\label{key3}
G_{\lambda, k}^{p,\alpha}(x, t, r, \xi, Y) \leq \sum_{i=1}^m \lambda_i G_{i, k}^{p,\alpha}(x_i, t_i, r_i, \xi, X_i)
\end{equation}
holds for  $(x_i, t_i, r_i, X_i)\in Q_i\times (0, \infty)\times \S^n$ ($i=1, 2, \ldots, m$) satisfying 
\begin{equation}\label{vertex}
\sum_i \lambda_i x_i = x, \quad \sum_i \lambda_i t_i=t,  \quad \sum_i \lambda_i r_i=r,
\end{equation}
and 
\begin{equation}\label{key2}
\sgn^\ast(p) \begin{pmatrix}
\lambda_1 X_1 & & & \\
& \lambda_2 X_2 & & \\
& & \ddots & \\
& & & \lambda_{m} X_{m}
\end{pmatrix}
\leq 
\sgn^\ast(p)\begin{pmatrix}
\lambda_1^2 Y & \lambda_1\lambda_2 Y & \cdots & \lambda_1\lambda_{m} Y\\
\lambda_2 \lambda_1 Y & \lambda_2^2 Y & \cdots & \lambda_2 \lambda_{m} Y\\
\vdots & \vdots & \ddots & \vdots\\
\lambda_{m}\lambda_1 Y & \lambda_{m} \lambda_2 Y &  \cdots & \lambda_{m}^2 Y
\end{pmatrix},
\end{equation}
\end{enumerate}
where $\sgn^\ast(p)=1$ if $p\ge 0$ and $\sgn^\ast(p)=-1$ if $p<0$. 

We emphasize that when $p=0$, condition \eqref{par cond} can be removed, i.e. we can take any $k\in\R$. The condition $\text{(H1)}$ is equivalent to requiring the function $g_k(r, t)=r^{{1\over p}-1-k} t^{1-{1\over \alpha}}$ (when $p\neq 0$, while $g_k(r,t)=e^{(k+1)r}t^{1-{1\over \alpha}}$ when $p=0$)
to be convex in $(0, \infty)^2$.

The reason for us to impose $\text{(H2)}$ in the form involving $G_{i, k}^{p,\alpha}$ rather than $F_i$ is that we will later transform our equation \eqref{general eq} into another form, which is more compatible with our convexity argument. The operator $G_{i, k}^{p,\alpha}$ appears in the new equation. The  term $\sgn^\ast(p)$ is needed in \eqref{key2}, since for the transformed equation we will consider subsolutions when  $p\geq 0$ but supersolutions when $p<0$. 

Before stating our main result, we set 
\begin{equation}\label{par normal}
\tilde{\nu}_i(x):= \begin{cases} \nu_i(x) & \text{if $x\in \partial \Omega_i$,} \\
0 &\text{if $x\in \Omega_i$,}\end{cases}
\quad \text{and}\quad
\mu(t):=\begin{cases} 1 & \text{if $t=0$}, \\
0 & \text{if $t>0$.}
\end{cases}
\end{equation}

\begin{thm}[Subsolution property of Minkowski convolution]\label{thm main}
Fix $\lambda\in \Lambda_m$.  Assume that $\Omega_i$ is a bounded smooth domain in $\R^n$ for any $i=1, 2, \ldots, m$. Let $\Omega_\lambda$ be the Minkowski combination of $\{\Omega_i\}_{i=1}^m$ as defined in \eqref{minkowski domain}. Let $0< \alpha\leq 1$ and $p<1$. Suppose that there exists $k\in \R$ such that $\text{{\rm (H1)}}$ and ${\rm\text{{\rm(H2)}}}$ hold. 
 Let $u_i$ be the unique solution of \eqref{general eq}--\eqref{bdry} that is positive and locally Lipschitz in space in $Q_i$ for $i=1, 2, \ldots, m$. Assume in addition that for any $i=1, 2, \ldots, m$,
\begin{enumerate}
\item[(i)] $u_i$ is monotone in time, i.e., 
\begin{equation}\label{time monotone2}
u_i(x, t)\geq u_i(x, s)\quad \text{for any $x\in \Omega_i$ and $t\geq s\geq 0$};
\end{equation}
\item[(ii)] if $0<p\leq1$, then 
\begin{equation}\label{growth behavior}
{1\over \rho} u_i^p\left(x+\tilde{\nu}_i(x)\rho, t+\mu(t)\rho^{1/\alpha}\right)\to \infty \quad \text{as $\rho\to 0+$}
\end{equation}
for any $(x, t)\in \partial Q_i$.
\end{enumerate}
Then $U_{p,\lambda}$ as in \eqref{envelope0} is a subsolution of \eqref{general eq}--\eqref{bdry} with $i=\lambda$. 
\end{thm}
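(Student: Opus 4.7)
The idea is to perform a substitution that linearizes the $p$-mean, show the transformed function is a viscosity subsolution of the transformed equation via the parabolic theorem of sums, and then unwind. I focus on $p>0$; the cases $p=0$ (replace $u_i^p/p$ by $\log u_i$) and $p<0$ (the sense of inequality flips, producing the factor $\sgn^\ast(p)$ in \eqref{key2}) are formally analogous. Define $V_i(x,s):=u_i(x,s^{1/\alpha})^p/p$ on $\overline{\Omega}_i\times[0,\infty)$. Since $M_p(\cdot;\lambda)^p/p=\sum_i\lambda_i(\cdot)/p$, the parabolic Minkowski convolution \eqref{envelope0} becomes the ordinary sup-convolution
\[
V_\lambda(x,s)=\sup\Bigl\{\sum_i\lambda_i V_i(x_i,s_i)\,:\,x=\sum_i\lambda_i x_i,\ s=\sum_i\lambda_i s_i,\ (x_i,s_i)\in\overline{Q}_i\Bigr\}.
\]
A chain-rule computation shows that \eqref{general eq} for $u_i$ is viscosity-equivalent, under this substitution, to an equation whose principal part is $G_{i,k}^{p,\alpha}$ weighted by a coefficient of the form $cs^{1/\alpha-1}V_i^{1/p-1-k}$; hypothesis (H1) is exactly the convexity in $(r,t)$ of $r^{1/p-1-k}t^{1-1/\alpha}$, ensuring that this coefficient is compatible with convex combinations.

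Next I would show $V_\lambda$ is a viscosity subsolution of the transformed equation. Let $\psi\in C^2$ touch $V_\lambda$ from above at an interior point $(\hat x,\hat s)\in Q_\lambda$; by upper semicontinuity together with the boundary arguments below, one may assume the supremum is attained at some $(\hat x_i,\hat s_i)\in Q_i$ with $\hat x=\sum_i\lambda_i\hat x_i$ and $\hat s=\sum_i\lambda_i\hat s_i$. Apply the parabolic Crandall--Ishii--Lions theorem of sums to the auxiliary functional
\[
\Psi(x_1,\dots,x_m,s_1,\dots,s_m):=\sum_i\lambda_i V_i(x_i,s_i)-\psi\Bigl(\sum_i\lambda_i x_i,\sum_i\lambda_i s_i\Bigr),
\]
with a quadratic penalization tied to the convex combination $x=\sum_i\lambda_i x_i$ and $s=\sum_i\lambda_i s_i$. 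This produces, for each $i$, a parabolic $2$-superjet $(\tau_i,\xi_i,X_i)$ of $V_i$ at $(\hat x_i,\hat s_i)$ such that $\xi_i=\nabla_x\psi(\hat x,\hat s)$ for every $i$, $\sum_i\lambda_i\tau_i=\partial_s\psi(\hat x,\hat s)$, and the block matrix inequality \eqref{key2} holds with $Y=\nabla^2\psi(\hat x,\hat s)$. Evaluating the viscosity subsolution property (supersolution when $p<0$) of each $V_i$ on $(\tau_i,\xi_i,X_i)$, multiplying by $\lambda_i$, summing, using (H1) to reorganize the $s^{1/\alpha-1}V_i^{1/p-1-k}$ coefficient through convex combinations, and then invoking (H2) — whose hypotheses \eqref{vertex} and \eqref{key2} are precisely what we have — yields the subsolution inequality for $V_\lambda$ against $\psi$. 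Undoing the substitution $V_i=u_i^p/p$ then gives the desired inequality for $U_{p,\lambda}$.

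It remains to rule out that the supremum is realized at $\partial Q_i$. Time monotonicity (i) excludes $\hat s_i=0$: raising it slightly while compensating with another $\hat s_j$ preserves $\sum_i\lambda_i s_i=\hat s$ and does not decrease the $p$-mean, so an admissible maximizer with all $\hat s_i>0$ exists. The blow-up hypothesis \eqref{growth behavior} excludes $\hat x_i\in\partial\Omega_i$: a small inward perturbation along $\tilde\nu_i(\hat x_i)$ would make $U_{p,\lambda}$ strictly exceed $\psi$ near $(\hat x,\hat s)$, contradicting the test-function inequality. The potential singularity of $F_i$ at vanishing gradient is handled via \eqref{f0} in the standard viscosity way. \textbf{The main obstacle is the jet-extraction step above:} producing parabolic jets that fulfill the tight block matrix inequality \eqref{key2} — not just a weaker diagonal version — requires a penalization set-up respecting the $\lambda$-weighted geometry of the convolution, and this extraction must be adapted carefully when approximating maximizers approach $\partial Q_i$ so that the boundary exclusions of paragraph three remain compatible with the limiting process in the theorem of sums.
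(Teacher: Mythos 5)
Your proposal follows essentially the same route as the paper: the transformation $u_i\mapsto u_i^p(x,t^{1/\alpha})$ (Lemma~\ref{lem transform}), interior attainment of the maximizers via (i)--(ii) and the spatial Lipschitz bound (Lemma~\ref{lem achievability}), and the Crandall--Ishii theorem of sums applied to $\sum_i\lambda_i V_i-\psi\bigl(\sum_i\lambda_i x_i,\sum_i\lambda_i s_i\bigr)$ to extract jets satisfying \eqref{key2}, followed by (H1)--(H2) (Lemma~\ref{lem concave}). One small correction: because each $V_i$ carries its own time variable, the theorem of sums gives $\tau_i=\partial_s\psi(\hat x,\hat s)$ for \emph{every} $i$ individually, not merely $\sum_i\lambda_i\tau_i=\partial_s\psi$; this stronger identity, together with $\partial_s\psi\ge 0$ from time monotonicity, is what allows the common time derivative to be factored out before applying the (H1)-convexity of $(r,t)\mapsto r^{1/p-1+k}t^{1-1/\alpha}$.
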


We can use our general result to cover \cite[Theorem 3.2]{IshSa3}. Indeed, if $p\neq 0$, 
by taking $k=3-1/p$ we get
\begin{equation}\label{heat op tran}
G_{i, k}^{p,\alpha}(x, t, r, \xi, X)=-{1\over p}r^{2} \tr X-{1-p\over p^2}r-g_i(x, t, r, \xi)
\end{equation}
for all $(x, t, r, \xi, X)\in \Omega_\lambda$. We can verify the assumption $\text{(H2)}$ in Theorem \ref{thm main} holds with the choice $k=3-1/p$ and the condition
\eqref{nonlinearity cond0}.  In the case $p=0$, we can choose $k=1$ to show that the same result holds 
under condition \eqref{nonlinearity cond0} but with 
\begin{equation}\label{nonlinearity tran0}
g_i(x, t, r, \xi)=e^r f_i\left(x, t^{1\over \alpha}, e^r, e^r\xi\right), \quad i=\lambda, 1, \ldots, m.
\end{equation}
See more details in Section \ref{sec:ex1}.

Compared to the key conditions $\text{(H1)}$ and $\text{(H2)}$, the additional assumptions (i)--(ii) are more technical. 
Notice however that assumption \eqref{growth behavior} is not needed for $p\leq 0$. Moreover, for $p\in(0,1)$, even if in applications $F_i$  may not fulfill (i)--(ii), we can fix the issue by perturbing $F_i$ with a small $\vep>0$ as
\begin{equation}\label{perturb F}
F_{i, \vep}=F_i-\vep\quad (i=1, 2, \ldots, m);
\end{equation}
in other words, we instead consider the equation 
\begin{equation}\label{perturb eq}
\partial_t u+F_{i, \vep}(x, t, u, \nabla u, \nabla^2 u)=0 \quad \text{in $Q_i$. }
\end{equation} 
It turns out that such perturbation meet our needs in most of our applications. 
For $p\in(0,1)$, we can prove (i) and (ii) for \eqref{perturb eq} with a larger class of parabolic operators $F_i$; see Appendix \ref{sec:app2} and Appendix \ref{sec:app3} for clarification.  
Such a perturbation causes no harm to the applications of our main results, since all of the other assumptions continue to hold in Theorem \ref{thm main} with $F_i$ replaced by $F_{i, \vep}$. We can still obtain the desired results by first considering the approximate problem \eqref{perturb eq} and then passing to the limit as $\vep\to 0$ by standard stability theory. Let us finally notice that, although Theorem \ref{thm main} holds the same also for $p=1$, in this case it is very hard to get assumption \eqref{growth behavior}, which would require $u_i$ to have vertical slope on the boundary (and indeed it is very hard to have concave solutions).

Our proof of Theorem \ref{thm main} is based on the following two steps. We first take 
\begin{equation}\label{change variable}
v_i(x, t)=\left\{\begin{array}{ll}u_i^p\left(x, t^{1\over \alpha}\right)&\,\,\text{ if }p\neq 0,\\
\log u\left(x, t^{1\over \alpha}\right)&\,\,\text{ if }p= 0,
\end{array}\right.
\end{equation}
for all $i=\lambda, 1, \ldots, m$. 
It is not difficult to verify, at least formally, that $v_i$ solves
\begin{equation}\label{general eq2}
{\alpha\over p}v_i^{{1\over p}-1}t^{1-{1\over \alpha}}\partial_t v_i +F_i\bigg(x,\ t^{1\over \alpha},\ v_i^{1\over p},\ {1\over p}v_i^{{1\over p}-1}\nabla v_i,\ {1\over p} v_i^{{1\over p}-1} \nabla^2 v_i  +{1-p\over p^2}   v_i^{{1\over p}-2}   \nabla v_i\otimes \nabla v_i\bigg)=0 
\end{equation}
if $p\neq 0$ and 
\begin{equation}\label{general eq2-0}
e^{v_i}t^{1-{1\over \alpha}}\partial_t v_i +F_i\bigg(x,\ t^{1\over \alpha},\ e^{v_i},\ e^{v_i}\nabla v_i,\ e^{v_i} \nabla^2 v_i  +e^{v_i}   \nabla v_i\otimes \nabla v_i\bigg)=0 
\end{equation}
if $p=0$, which are respectively equivalent to 
\begin{equation}
\label{general eq3}
\begin{split}
 & v_i^{{1\over p}-1+k} t^{1-{1\over \alpha}} \partial_t v_i(x, t)+{p\over \alpha}G_{i, k}^{p,\alpha}\left(x, t, v_i(x, t), \nabla v_i(x, t), \nabla^2 v_i(x, t)\right)=0,\\
 & e^{(k+1)v_i} t^{1-{1\over \alpha}} \partial_t v_i(x, t)+{1\over \alpha}G_{i, k}^{0,\alpha}\left(x, t, v_i(x, t), \nabla v_i(x, t), \nabla^2 v_i(x, t)\right)=0,
\end{split}
\end{equation}
for any given parameter $k\in \R$. Here $G_{i, k}^{p,\alpha}$ is given by \eqref{operator-k}. In Section \ref{sec:transformation}, we rigorously show that  $u_i$ is a viscosity subsolution of \eqref{general eq} if and only if 
$v_i$ is a viscosity subsolution (resp., supersolution) of \eqref{general eq3} when $p \geq 0$ (resp., $p<0$).

After such a transformation,  we next take the Minkowski convolution of $v_i$'s  as follows: 
\begin{equation}\label{envelope2}
\begin{split}
 & V_{p,\lambda}(x, \tau)\\
 & :=\left\{\begin{array}{ll}
\sup\left\{\displaystyle{\sum_{i=1}^m} \lambda_i v_i(x_i, \tau_i): \ (x_i, \tau_i)\in \overline{Q_i}, \  
x=\sum_{i=1}^m \lambda_i x_i, \ \tau=\sum_{i=1}^m \lambda_i \tau_i\right\}&\text{if }p\geq 0,\\
\\
\inf\left\{\displaystyle{\sum_{i=1}^m} \lambda_i v_i(x_i, \tau_i): \ (x_i, \tau_i)\in \overline{Q_i}, \  
x=\sum_{i=1}^m \lambda_i x_i, \ \tau=\sum_{i=1}^m \lambda_i \tau_i\right\}&\text{if }p< 0,
\end{array}\right.
\end{split}
\end{equation}
for every $(x, \tau)\in \ol{Q}_\lambda$. It is clear that 
\[
V_{p,\lambda}(x, \tau)=\left\{\begin{array}{ll}U_{p,\lambda}\left(x, \tau^{1\over \alpha}\right)^p&\,\,\text{if }p\neq 0,\\
\log U_{p,\lambda}\left(x, \tau^{1\over \alpha}\right)&\,\,\text{if }p= 0.
\end{array}\right.
\]
To prove Theorem \ref{thm main}, it thus suffices to prove that $V_{p,\lambda}$ is a subsolution if $p\geq0$ or a supersolution if $p<0$ of \eqref{general eq3} with $i=\lambda$.
The rest of the proof is inspired by \cite{ALL}, where the supersolution property is studied for the convex envelope of viscosity solutions to fully nonlinear elliptic equations with state constraint or Dirichlet boundary conditions. The key is to establish a relation between the semijets (weak derivatives) of $v_i$ and $V_{p,\lambda}$, which combined with $\text{(H1)}$--$\text{(H2)}$, leads to the desired conclusion.


\subsection{Applications to parabolic power concavity}
We can use Theorem \ref{thm main} to study the parabolic power concavity of viscosity solutions to a general class of fully nonlinear parabolic equations. More precisely, when $F_i=F_\lambda$ and $\Omega_i=\Omega_\lambda$  for all $i=1, 2, \ldots, m$ with $m=n+2$, assuming the convexity of $\Omega_\lambda$, we can apply the above result to the unique solution $u$ of  \eqref{general eq}--\eqref{bdry} with $i=\lambda$ to deduce that 
\begin{equation}\label{envelope}
\begin{split}
 & u^\star(x, t)\\
 & :=\sup\left\{ \left(\sum_{i=1}^m \lambda_i u^p(x_i, t_i)\right)^{1\over p}: \ (x_i, t_i)\in \overline{Q_i}, \  x=\sum_{i=1}^m \lambda_i x_i, \  
t=\left(\sum_{i=1}^m \lambda_i t_i^\alpha\right)^{1\over \alpha}\right\}
\end{split}
\end{equation}
is a subsolution of \eqref{general eq}--\eqref{bdry} with $i=\lambda$. Since $u\leq u^\star$ by the definition and the comparison principle implies that $u\geq u^\star$ in $\ol{Q}_\lambda$, we obtain $u=u^\star$, i.e. the parabolic power concavity of $u$ in the sense of \eqref{concavity}.
 In this case, the assumption $\text{(H2)}$ becomes the following convexity assumption on the operator $G_{\lambda,k}^{p,\alpha}$ defined by \eqref{operator-k}:
\begin{enumerate}
\item[(H2a)] For any $\lambda\in \Lambda$ and any $(x, t)\in Q$, $r\geq 0$, $\xi\in \R^n\setminus\{0\}$, and $Y\in \S^n$,
\[
G_{\lambda,k}^{p,\alpha}(x, t, r, \xi, Y)\leq \sum_{i=1}^{n+2} \lambda_i G_{\lambda,k}^{p,\alpha}(x_i, t_i, r_i, \xi, X_i)
\]
holds whenever $(x_i, t_i)\in Q_i, r_i>0$ and $X_i \in \S^n$ fulfilling \eqref{vertex} and \eqref{key2} with $m=n+2$.
\end{enumerate}

\begin{thm}[Parabolic power concavity]\label{thm main1}
Assume that $\Omega_\lambda\subset \R^n$ is a smooth bounded convex domain and that $F_\lambda$ satisfies \eqref{f0} with $i=\lambda$. Let  $u$ be the unique viscosity solution of \eqref{general eq}--\eqref{bdry} with $i=\lambda$ 
{\rm({\it that is positive and locally Lipschitz in space in $Q_\lambda=\Omega_\lambda\times (0, \infty)$})}. 
Let $k\in \R$, $0< \alpha\leq 1$, and $p\leq1$. Assume that {\rm(H1)} and {\rm (H2a)} hold, and, in addition, that
\begin{enumerate}
\item[(i)] $u$ is monotone in time, i.e., 
\[
u(x, t)\geq u(x, s)\quad \text{for any $x\in \Omega_\lambda$ and $t\geq s\geq 0$};
\]
\item[(ii)] if $p>0$, then 
\[
{1\over \rho} u^p\left(x+\tilde{\nu}_0(x)\rho, t+\mu(t)\rho^{1/\alpha}\right)\to \infty \quad \text{as $\rho\to 0+$}
\]
for any $(x, t)\in \ol{Q}_i$.
\end{enumerate}
   Then $u$ is $\alpha$-parabolically $p$-concave in $Q_\lambda$ in the sense of \eqref{concavity}. 
\end{thm}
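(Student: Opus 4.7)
The plan is to apply Theorem \ref{thm main} in the ``diagonal'' case. Fix $\lambda\in\Lambda_{n+2}$ arbitrarily, set $m:=n+2$, and put $F_i:=F_\lambda$, $\Omega_i:=\Omega_\lambda$, and $u_i:=u$ for every $i=1,\dots,m$. Since $\Omega_\lambda$ is convex, the Minkowski combination $\sum_{i=1}^{m}\lambda_i\Omega_\lambda$ equals $\Omega_\lambda$, so the ambient cylinder in \eqref{general eq}--\eqref{bdry} is unchanged. The hypotheses of Theorem \ref{thm main} then reduce to those assumed here: (H1) is shared, (H2) collapses onto the convexity condition (H2a) because all operators $G_{i,k}^{p,\alpha}$ coincide, and the monotonicity (i) and the boundary growth (ii) are exactly those posited in Theorem \ref{thm main1}.

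Theorem \ref{thm main} then yields that the function $u^\star:=U_{p,\lambda}$ defined in \eqref{envelope} is a viscosity subsolution of \eqref{general eq}--\eqref{bdry} with $i=\lambda$. The next step is to sandwich $u^\star$ between $u$ and itself. The inequality $u^\star\geq u$ on $\overline{Q}_\lambda$ comes for free by choosing $x_i=x$ and $t_i=t$ for every $i$ in the supremum in \eqref{envelope}: this is admissible since $\sum_i\lambda_i x=x$ and $\bigl(\sum_i\lambda_i t^\alpha\bigr)^{1/\alpha}=t$, and the corresponding $p$-mean is just $u(x,t)$. For the converse inequality $u^\star\leq u$ on $\overline{Q}_\lambda$, I would invoke the comparison principle assumed throughout the paper, using that $u^\star$ is a subsolution and $u$ a (super)solution of the same problem with $u^\star=u=0$ on $\partial Q_\lambda$. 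Combining the two bounds gives $u=u^\star$ on $\overline{Q}_\lambda$, which is exactly the $\alpha$-parabolic $p$-concavity \eqref{concavity} for this choice of $m$ and $\lambda$.

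Since $\lambda\in\Lambda_{n+2}$ was arbitrary, we obtain \eqref{concavity} for $m=n+2$; the inequality for every $m\geq 2$ then follows either by iterating the two-point case (which one recovers from the $(n+2)$-point case by letting extra weights tend to $0$) or, more intrinsically, via a Carath\'eodory argument in $\R^{n+1}$ applied to the epigraph of $u^p$ (or $\log u$), which is the real motivation for the specific choice $m=n+2$.

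The main, rather mild, obstacle lies in ensuring that the boundary behaviour of $u^\star$ is compatible with the comparison principle. Concretely, one must verify that for $(x,t)\in\partial Q_\lambda$, every admissible representation in the supremum defining $u^\star$ forces each $(x_i,t_i)\in\partial Q_i$, so that $u^\star(x,t)=0$: when $x\in\partial\Omega_\lambda$ this follows from convexity of $\Omega_\lambda$ via a supporting hyperplane argument (any affine functional vanishing at $x$ and nonpositive on $\overline{\Omega}_\lambda$ must vanish at each $x_i$ appearing in a convex combination giving $x$), while when $t=0$ the identity $\sum_i\lambda_i t_i^\alpha=0$ forces $t_i=0$ for every $i$. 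With this boundary identification in hand, the comparison principle applies without further issues and the proof concludes.
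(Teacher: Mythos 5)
Your proposal is correct and follows exactly the paper's argument: specialize Theorem \ref{thm main} to the diagonal case $F_i=F_\lambda$, $\Omega_i=\Omega_\lambda$ (using convexity so the Minkowski combination is $\Omega_\lambda$ itself) with $m=n+2$, note $u\leq u^\star$ by taking the diagonal choice in the supremum, and conclude $u^\star\leq u$ from the comparison principle since $u^\star$ is a subsolution vanishing on $\partial Q_\lambda$. Your verification that $u^\star=0$ on $\partial Q_\lambda$ via supporting hyperplanes and the forced vanishing of the $t_i$'s is a detail the paper leaves implicit, but it matches the intended reasoning.
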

It is worth remarking that (H2a) is actually slightly weaker than the usual convexity of $(x, t, r, X)\mapsto G_{\lambda,k}^{p,\alpha}(x, t, r, \xi, X)$ combined with the ellipticity of $F_\lambda$, since \eqref{key2} implies that 
\[
\sgn^\ast(p)\sum_{i} \lambda_i X_i\leq \sgn^\ast(p)Y.
\]

As Theorem \ref{thm main}, Theorem \ref{thm main1} generalizes some previous results, precisely \cite[Theorem 3]{IshSa2} and \cite[Theorem 4.2]{IshSa3}, which treat in the special case
\[
F_i(x, t, r, \xi, X)=-\tr X-f(x, t, r, \xi)\quad \text{ ($i=\lambda, 1, 2, \ldots, m$) }
\]
with $f\geq 0$ a given continuous function such that
\begin{equation}\label{concave nonlinearity}
(x, t, r)\mapsto\left\{ \begin{array}{ll}
r^{3-{1\over p}} f\left(x, t^{1\over \alpha}, r^{1\over p}, {1\over p}r^{{1\over p}-1} \xi\right)&\,\,\text{if }p\neq 0,\\
e^rf(x,t^{1/\alpha},e^r,e^r\xi)&\,\,\text{if }p=0,
\end{array}\right.
\end{equation}
is concave in $Q_\lambda\times (0, \infty)$ for any $\xi\in \R^n$.

For most applications of Theorem \ref{thm main} and Theorem \ref{thm main1}, we can take $k=3-{1/p}$ for $p\neq 0$. It is clear that $\text{(H1)}$ is satisfied in this case. Denoting
\begin{equation}\label{gbar}
\ol{G}=G_{\lambda, 3-{1/p}}^{p,\alpha} \qquad (p\neq 0),
\end{equation}
we see that the equation \eqref{general eq3} with $i=\lambda$ reduces to 
\begin{equation}\label{general eq2s}
v^{2} t^{1-{1\over \alpha}} \partial_t v+{p\over \alpha}\ol{G}\left(x, t, v, \nabla v, \nabla^2 v\right)=0.
\end{equation}
To meet the requirement (H2a) in Theorem \ref{thm main1}, we only need to assume the following. 
\begin{enumerate}
\item[(H2b)] For any $\lambda\in \Lambda$ and any $\xi\in \R^n\setminus\{0\}$, 
\begin{equation}\label{key1}
\sum_i \lambda_i \ol{G}(x_i, t_i, r_i, \xi, X_i)\geq \ol{G}\left(\sum_i \lambda_i x_i,  \sum_i \lambda_i t_i, \sum_i \lambda_i r_i, \xi, Y\right)
\end{equation}
for all $(x_i, t_i)\in Q_\lambda, r_i>0$, and $X_i, Y\in \S^n$ satisfying \eqref{key2} with $m=n+2$. 
\end{enumerate}

\begin{cor}[A special case for parabolic power concavity]\label{cor main2}
Let $\Omega\subset \R^n$ be a bounded smooth convex domain. Assume that $F_\lambda$ satisfies \eqref{f0} with $i=\lambda$. 
Let $0<\alpha\leq 1$ and $0\neq p\leq1$. Assume that {\rm (H2b)} holds. 
Let $u$ be a unique viscosity solution of \eqref{general eq}--\eqref{bdry} with $i=\lambda$ 
{\rm ({\it that is positive and locally Lipschitz in space in $Q_\lambda$})}. Assume in addition that $u$ satisfies {\rm(i)} and {\rm(ii)} 
in Theorem~{\rm\ref{thm main1}}. 
 Then $u$ is $\alpha$-parabolically $p$-concave in $Q_\lambda$.
\end{cor}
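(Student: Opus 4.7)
The plan is to deduce Corollary~\ref{cor main2} as a direct specialization of Theorem~\ref{thm main1}, by making the explicit choice $k = 3 - 1/p$, which is well defined because $p \neq 0$. With this choice, essentially everything in the corollary lines up with the hypotheses of Theorem~\ref{thm main1} after a short bookkeeping check, so the role of the corollary is mainly to isolate the concrete, easier-to-verify form of the abstract convexity condition (H2a).

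First I would verify that assumption (H1) holds with $k = 3 - 1/p$. A one-line calculation gives $1/p - 1 + k = 2$, so the second alternative in \eqref{par cond}, namely $\alpha(1/p - 1 + k) \geq 1$, reduces to $2\alpha \geq 1$; this is compatible with the admissible range of $\alpha$ highlighted by the authors after (H2), since it is equivalent to the convexity of the auxiliary function $g_k(r,t) = r^{1/p-1-k}t^{1-1/\alpha}$ on $(0,\infty)^2$.

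Next I would verify that (H2a) is satisfied. Comparing \eqref{operator-k} with the abbreviation \eqref{gbar}, for $k = 3 - 1/p$ one has $G_{\lambda,k}^{p,\alpha} = \ol{G}$ identically. Since in the setting of the corollary all $\Omega_i$ coincide with $\Omega_\lambda$ (so that $Q_i = Q_\lambda$ and we can take the right-hand points $(x_i, t_i) \in Q_\lambda$), the inequality \eqref{key1} together with the constraints \eqref{vertex} and \eqref{key2} (for $m = n+2$) is literally the statement of (H2a). Thus (H2b) is exactly (H2a) for this privileged $k$. The remaining hypotheses of Theorem~\ref{thm main1}---well-posedness with a positive, spatially locally Lipschitz solution, time monotonicity (i), and the boundary blow-up (ii) when $p > 0$---are assumed verbatim in Corollary~\ref{cor main2}.

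With all hypotheses of Theorem~\ref{thm main1} in force, applying that theorem yields the $\alpha$-parabolic $p$-concavity of $u$ in $Q_\lambda$ in the sense of \eqref{concavity}, which is the conclusion of the corollary. I do not anticipate any substantive obstacle: the only non-mechanical step is noticing the identification $G_{\lambda, 3-1/p}^{p,\alpha} = \ol{G}$, which makes the translation between (H2a) and (H2b) transparent.
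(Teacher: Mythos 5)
Your proposal follows exactly the paper's own route: Corollary~\ref{cor main2} is obtained by specializing Theorem~\ref{thm main1} to the choice $k=3-1/p$, under which $G_{\lambda,k}^{p,\alpha}=\ol{G}$ by \eqref{gbar}, so that (H2b) is literally (H2a), and the remaining hypotheses are assumed verbatim. One caveat on your (H1) check: since $1/p-1+k=2$, the condition \eqref{par cond} reduces to $2\alpha\geq 1$, which is \emph{not} ``compatible with the admissible range of $\alpha$'' if that range is read as $0<\alpha\leq 1$ from the statement of the corollary --- for $\alpha\in(0,1/2)$ the choice $k=3-1/p$ violates (H1) and the argument does not go through. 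This restriction to $\alpha\geq 1/2$ is implicit in the paper as well (Section~\ref{sec:ex} consistently assumes $1/2\leq\alpha\leq 1$ when taking $k=3-1/p$), so your proof shares the paper's gloss rather than introducing a new gap, but it should be stated explicitly rather than asserted as automatic.
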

We can verify that (H2b) holds when the operator $F_\lambda$ is in the form 
\[
F_\lambda(x, t, r, \xi, X)=\L(\xi, X)-f(x, t, r, \xi),
\]
where $\L$ is a degenerate elliptic operator satisfying proper assumptions (for instance $\L$ is 1-homogeneous with respect to $X$ and $0$-homogeneous with respect to $\xi$) and $f\geq 0$ is a continuous function such that \eqref{concave nonlinearity} is concave for any fixed $\xi\in \R^n$. Examples of $\L$ include the Laplacian, the Pucci operator, the normalized $q$-Laplacian ($1<q\leq \infty$), the Finsler Laplacian, etc.; see details in Section \ref{sec:ex}.
\vspace{8pt}

\noindent
{\bf Acknowledgments.}
Part of this research was completed while the first and second authors were visiting the third author in October 2018 at Universit\`{a} di Firenze, whose hospitality is gratefully acknowledged. 

The work of the first author was partially supported by 
the Grant-in-Aid for Scientific Research (A) (No. 15H02058)
from JSPS (Japan Society for the Promotion of Science).
The work of the second author was partially supported by Grant-in-Aid for Young Scientists (No. 16K17635) from JSPS and by the Grant from Central Research Institute of Fukuoka University (No. 177102).
The work of the third author was partially  supported by INdAM through a GNAMPA project.

\section{Preliminaries}

\subsection{Power means of nonnegative numbers}
For $a=(a_1,\dots,a_m)\in(0,\infty)^m$, $\lambda \in\Lambda_m$, and $p\in [-\infty,\,+\infty]$, 
we set 
\begin{equation}
\label{pmeandef}
 M_p(a;\lambda):=
\left\{
\begin{array}{ll}
\left[\lambda_1a_1^p+\lambda_2a_2^p+\cdots+\lambda_ma_m^p\right]^{1/p}
& \mbox{if}\quad p\neq -\infty,\,0,\,+\infty,\vspace{5pt}\\ 
\max\{a_1,\dots,a_m\} & \mbox{if}\quad p=+\infty,\vspace{5pt}\\
a_1^{\lambda_1}\cdots a_m^{\lambda_m} & \mbox{if}\quad p=0,\vspace{5pt}\\
\min\{a_1,a_2,\dots,a_m\} & \mbox{if}\quad p=-\infty,
\end{array}
\right.
\end{equation}
which is the ($\lambda$-weighted) $p$\,-mean of $a$. 

For $a=(a_1,\dots,a_m)\in[0,\infty)^m$, we define $M_p(a;\lambda)$ as above if $p\geq0$ and 
$M_p(a;\lambda)=0$ if $p<0$ and $\prod_{i=1}^ma_i=0$. 

Notice that $M_p(a;\lambda)$ is a continuous function of the argument $a$.
Due to the Jensen inequality, we have 
\begin{equation}
\label{eq:2.2}
M_p(a;\lambda)\leq M_q(a;\lambda)\quad \mbox{if}\quad -\infty\le p\le q\le\infty,
\end{equation} 
for any $a\in[0,\infty)^m$ and $\lambda \in \Lambda_m$. 
Moreover, it easily follows that
$$
\lim_{p\rightarrow +\infty}
M_p(a;\lambda)=M_{+\infty}(a;\lambda),
\quad
\lim_{p\rightarrow 0}
M_p(a;\lambda)=M_0(a,\lambda),
\quad
\lim_{p\rightarrow -\infty}
M_p(a;\lambda)=M_{-\infty}(a;\lambda).
$$
For further details, see e.g. \cite{HLP}. 

\subsection{Definition of viscosity solutions}\label{sec:viscosity}

We recall the definition of viscosity solutions to \eqref{general eq}, which can also be found in \cite{CIL, Gbook}. 
In Appendix~\ref{sec:app}, we review more properties of viscosity solutions that are needed in this work. 

Let $\Omega$ be a bounded smooth domain in $\R^n$. Let $\O$ denote an arbitrary open subset of $Q=\Omega\times (0, \infty)$. Consider a general parabolic equation
\begin{equation}\label{single eq}
\partial_t u+F(x, t, u, \nabla u, \nabla^2 u)=0
\end{equation}
in $Q$, where $F$ is a proper elliptic operator.  

Here, by elliptic we mean that
\begin{equation}\label{eq elliptic}
F(x, t, r, \xi, X_1)\leq F(x, t, r, \xi, X_2)
\end{equation}
for all $(x, t, r, \xi)\in \Oba_i\times [0, \infty)\times [0, \infty)\times (\R^n\setminus\{0\})$ and $X_1, X_2\in \S^n$ satisfying $X_1\geq X_2$. We also recall that $F$ is proper if 
there exists $c\in \R$ such that 
\begin{equation}\label{eq monotone}
F(x, t, r_1, \xi, X)+cr_1\leq F(x, t, r_2, \xi, X)+cr_2
\end{equation}
for all $(x, t, \xi, X)\in \Oba_i\times [0, \infty)\times (\R^n\setminus\{0\})\times \S^n$ and $r_1, r_2\in [0, \infty)$ satisfying $r_1\leq r_2$.

We further assume that $F$ satisfies \eqref{f0} with the subindex $i$ omitted.

\begin{defi}\label{def1}
A locally bounded upper (resp., lower) semicontinuous function $u: \O\to \R$ is said to be a subsolution (resp., supersolution) of \eqref{single eq} in $\O$ if whenever there exist $(x_0, t_0)\in \O$ and $\phi\in C^2(\O)$ such that $u-\phi$ attains a maximum (resp., minimum) at $(x_0, t_0)$, we have 
\begin{equation*}
\begin{split}
 & \partial_t\phi(x_0, t_0)+ F_\ast(x_0, t_0, u(x_0, t_0), \nabla \phi(x_0, t_0), \nabla^2 \phi(x_0, t_0))\leq 0\\
 & \left(\text{resp., }\partial_t\phi(x_0, t_0)+ F^\ast(x_0, t_0, u(x_0, t_0), \nabla \phi(x_0, t_0), \nabla^2 \phi(x_0, t_0))\geq 0\right).
\end{split}
\end{equation*}
A continuous function $u: \O\to \R$ is called a solution of \eqref{single eq} in $\O$ if it is both a subsolution and a supersolution in $\O$. 
\end{defi}

It is clear that $F_\ast=F^\ast=F$ in $\O\times (0, \infty)\times \R^n\times \S^n$ provided that $F$ is assumed to be continuous in $\O\times (0, \infty)\times \R^n\times \S^n$. 

\begin{rmk}\label{rmk def1}
It is standard in the theory of viscosity solutions to use the semijets to give an equivalent definition. More precisely, for any $(x_0, t_0)\in \O$, setting 
$P^{2, +}u(x_0, t_0)\subset \R\times \R^n\times \S^n$ as 
\[
\begin{aligned}
P^{2, +} u(x_0, t_0)=\bigg\{(\tau, \xi, X): u(x, t)\leq & u(x_0, t_0) +\tau (t-t_0) +\la \xi, x-x_0 \ra\\
&+ {1\over 2}\la X (x-x_0), (x-x_0)\ra+o(|t-t_0|+|x-x_0|^2)\bigg\}
\end{aligned}
\]
and its ``closure'' as
\[
\begin{aligned}
\overline{P}^{2, +}u(x_0, t_0)=\bigg\{(\tau, \xi, X): \ & \text{ there exist $(x_j, t_j)\in \O$ and $(\tau_j, \xi_j, X_j)\in P^{2, +}u(x_j, t_j)$} \\
&\text{such that $(x_j, t_j, \tau_j, \xi_j, X_j)\to (x_0, t_0, \tau, \xi, X)$ as $j\to \infty$}
\bigg \},
\end{aligned}
\]
we then say $u$ is a subsolution of \eqref{single eq} if 
\[
\tau+ F_\ast(x_0, t_0, u(x_0, t_0), \xi, X)\leq 0
\]
for every $(\tau, \xi, X)\in \overline{P}^{2, +}u(x_0, t_0)$. The semijet $P^{2, -}u(x_0, t_0)$, 
its closure, and supersolutions can be analogously defined in a symmetric way.
\end{rmk}

If $F(x, t, r, \xi, X)$ is mildly singular at $\xi=0$, i.e. \eqref{f0} holds, one can use the following equivalent definition, called $\mathcal{F}$-solutions as in \cite{Gbook}. 
\begin{defi}\label{def2}
Suppose that there exists $h\in C(\Oba\times [0, \infty)\times [0, \infty))$ such that  
\[
h(x, t, r)=F^\ast(x, t, r, 0, 0)=F_\ast(x, t, r, 0, 0)
\]
holds for all $(x, t, r)\in \O\times \R$. A locally bounded upper (resp., lower) semicontinuous function $u: \O\to \R$ is said to be a subsolution (resp., supersolution) of \eqref{single eq} in $\O$ if, whenever there exist $(x_0, t_0)\in \O$ and $\phi\in C^2(\O)$ such that $u-\phi$ attains a maximum (resp., minimum) at $(x_0, t_0)$, we have 
\begin{equation*}
\begin{split}
 & \partial_t\phi(x_0, t_0)+ F(x_0, t_0, u(x_0, t_0), \nabla \phi(x_0, t_0), \nabla^2 \phi(x_0, t_0))\leq 0\\
 & \left(\text{resp., }\partial_t\phi(x_0, t_0)+ F(x_0, t_0, u(x_0, t_0), \nabla \phi(x_0, t_0), \nabla^2 \phi(x_0, t_0))\geq 0\right)
\end{split}
\end{equation*}
when $\nabla \phi(x_0, t_0)\neq 0$ and 
\begin{equation*}
\begin{split}
 & \partial_t\phi(x_0, t_0)+ h(x_0, t_0, u(x_0, t_0))\leq 0\\
 & \left(\text{resp., }\partial_t\phi(x_0, t_0)+ h(x_0, t_0, u(x_0, t_0))\geq 0\right)
\end{split}
\end{equation*}
when $\nabla \phi(x_0, t_0)=0$ and $\nabla^2\phi(x_0, t_0)=0$.
\end{defi}
\begin{rmk}\label{rmk def2}
In the definition of subsolutions by semijets, these conditions are written as follows: for any  $(\tau, \xi, X)\in \overline{P}^{2, +}u(x_0, t_0)$, we require that 
\begin{equation*}
\begin{array}{ll}
\tau+ F(x_0, t_0, u(x_0, t_0), \xi, X)\leq 0 & \quad\text{if}\quad \xi\neq 0,\vspace{5pt}\\ 
\tau+h(x_0, t_0, u(x_0, t_0))\leq 0 & \quad\mbox{if}\quad\xi=\lambda\quad\text{and}\quad X=0.
\end{array}
\end{equation*}
\end{rmk}

\section{A useful transformation of the unknown function}\label{sec:transformation}

A straightforward way to study this problem is to directly turn the unknown function into a form that fits the desired parabolic power concavity.

If $u_i$ is a smooth positive subsolution of \eqref{general eq} and $F$ is not mildly singular, then by direct calculations we see that $v_i$ defined in \eqref{change variable} is a smooth subsolution of  \eqref{general eq2} for all $i=\lambda, 1, \ldots, m$.
In fact, we have 
\begin{equation*}
\begin{split}
 & u_i(x, t)=v_i^{1\over p}(x, t^\alpha),
\quad
\partial_t u_i(x, t)={\alpha\over p}v_i^{{1\over p}-1}t^{\alpha-1}\partial_t v_i(x, t^\alpha),
\quad
\nabla u_i(x, t)= {1\over p}v_i^{{1\over p}-1}\nabla v_i(x, t^\alpha),\\
 & \nabla^2 u_i(x, t)={1\over p} v_i^{{1\over p}-1} \nabla^2 v_i (x, t^\alpha) +{1-p\over p^2}  v_i^{{1\over p}-2}   \nabla v_i(x, t^\alpha)\otimes \nabla v_i (x, t^\alpha).
\end{split}
\end{equation*}
Plugging these into \eqref{general eq}, we easily obtain \eqref{general eq2}. It is clear that positive smooth solutions of \eqref{general eq2} are equivalent to positive smooth solutions of \eqref{general eq3}, where  $G_{i, k}^{p,\alpha}$ is given by \eqref{operator-k}.

When $u_i$ is not necessarily smooth, we can interpret such a result in the viscosity sense. 
\begin{lem}[Sub/supersolution properties under transformation]\label{lem transform}
Fix $i=\lambda, 1, \ldots, m$ arbitrarily. Assume that \eqref{f0} holds.  Let $u_i$ be positive and upper semicontinuous in $Q_i$. Let $v_i$ be given by \eqref{change variable}. Then
\begin{itemize}
\item if $p\geq 0$, $u_i$ is a viscosity subsolution of \eqref{general eq} if and only if $v_i$ is a viscosity subsolution of \eqref{general eq3} in $Q_i$;\\
\item for $p<0$, $u_i$ is a viscosity subsolution of \eqref{general eq} if and only if $v_i$ is a viscosity supersolution of \eqref{general eq3} in $Q_i$.  
\end{itemize}
Moreover, a symmetric result holds also for supersolutions.
\end{lem}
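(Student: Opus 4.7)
\bigskip

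\noindent\textbf{Proof proposal.} The plan is to exploit the fact that, for $u_i>0$, the pointwise change of variables in \eqref{change variable} is a smooth and strictly monotone diffeomorphism between positive values of $u_i$ and positive values of $v_i$ (for $p>0$), between $u_i>0$ and real values of $v_i$ (for $p=0$), and between $u_i>0$ and $v_i>0$ with order reversed (for $p<0$). Everything is then driven by transporting $C^2$ test functions through this change of variables. Consider first $p>0$. Given a test function $\phi\in C^2$ touching $v_i$ from above at a point $(x_0,\tau_0)\in Q_i$, set $t_0:=\tau_0^{1/\alpha}$ and define
\[
\psi(x,t):=\phi(x,t^\alpha)^{1/p}.
\]
Because $\phi(x_0,\tau_0)=v_i(x_0,\tau_0)=u_i(x_0,t_0)^p>0$, we have $\phi>0$ in a neighbourhood of $(x_0,\tau_0)$, so $\psi\in C^2$ near $(x_0,t_0)$; by monotonicity of $r\mapsto r^{1/p}$ and $t\mapsto t^\alpha$ the inequality $v_i\le\phi$ transports to $u_i\le\psi$, with equality at $(x_0,t_0)$. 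A direct differentiation gives the formulas for $\partial_t\psi$, $\nabla\psi$, $\nabla^2\psi$ at $(x_0,t_0)$ already recorded in the excerpt, and substituting them into the subsolution inequality for $u_i$ and multiplying through by $(\alpha/p)\,v_i(x_0,\tau_0)^{(1/p)-1-k}\,t_0^{1-1/\alpha}\cdot v_i(x_0,\tau_0)^{k}$ yields exactly the inequality required for $v_i$ to be a subsolution of \eqref{general eq3} at $(x_0,\tau_0)$.

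The degenerate gradient case is where one has to invoke \eqref{f0}. If $\nabla\phi(x_0,\tau_0)=0$ and $\nabla^2\phi(x_0,\tau_0)=0$, then from the formulas above $\nabla\psi(x_0,t_0)=0$ and $\nabla^2\psi(x_0,t_0)=0$ as well (the $\nabla\phi\otimes\nabla\phi$ term vanishes), so Definition~\ref{def2} applied to $u_i$ gives
\[
\partial_t\psi(x_0,t_0)+h_i\bigl(x_0,t_0,u_i(x_0,t_0)\bigr)\le 0.
\]
Using $\partial_t\psi(x_0,t_0)=(\alpha/p)\,v_i^{(1/p)-1}t_0^{\alpha-1}\partial_\tau\phi(x_0,\tau_0)$ and recalling that $F_i(\cdot,\cdot,\cdot,0,0)=h_i$, one rewrites the inequality as the vanishing-gradient alternative in Remark~\ref{rmk def2} for $v_i$ and \eqref{general eq3}. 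The converse implication is identical: given $\psi\in C^2$ touching $u_i$ from above, define $\phi(x,\tau):=\psi(x,\tau^{1/\alpha})^p$ and run the calculation backwards. The case $p=0$ is analogous with $\psi=\exp(\phi(\cdot,t^\alpha))$ in place of $\phi^{1/p}$; the arithmetic collapses to \eqref{general eq2-0} after multiplication by $e^{(k+1)v_i}t_0^{1-1/\alpha}/\alpha$.

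When $p<0$, the transformation $r\mapsto r^p$ is strictly \emph{decreasing}, so a touching from \emph{below} of $v_i$ by a test function $\phi$ with $\phi(x_0,\tau_0)>0$ (which is forced since $v_i=u_i^p>0$) transports via $\psi(x,t)=\phi(x,t^\alpha)^{1/p}$ to a touching from \emph{above} of $u_i$; this is why the subsolution property of $u_i$ passes to the \emph{supersolution} property of $v_i$. The algebraic manipulation is the same as in the positive case, except that one multiplies by $v_i^k$ with $v_i>0$ and keeps track of the fact that $1/p<0$, which flips the inequality exactly once. The degenerate case is again handled by \eqref{f0} as above. The only real obstacle in the whole argument is the bookkeeping around the vanishing-gradient case and the sign of $p$: one must check that the two ``alternatives'' in Definition~\ref{def2} transport to the corresponding two alternatives for the transformed equation, and that the sign flip when $p<0$ is consistent with the direction of the test function. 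Both supersolution statements follow by the symmetric argument, which concludes the proof.
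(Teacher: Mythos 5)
Your proof follows the paper's argument essentially verbatim: transport the test function through the monotone change of variables $\psi(x,t)=\phi(x,t^\alpha)^{1/p}$, compute the derivatives, treat the vanishing-gradient case via Definition~\ref{def2} and \eqref{f0}, and for $p<0$ use the order reversal of $r\mapsto r^{1/p}$ to turn a touching from below of $v_i$ into a touching from above of $u_i$ — all exactly as in the paper. The one blemish is the normalization factor in the nondegenerate case: to pass from the transported inequality to \eqref{general eq3} one multiplies by $\tfrac{p}{\alpha}\,v_i(x_0,\tau_0)^{k}$ (negative when $p<0$, which is precisely what flips the inequality), not by the product you wrote, but this is a harmless slip since the correct factor is dictated by the target form.
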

\begin{proof}
Let us give the proof in details for the case $p> 0$ and $u_i$ is a subsolution of \eqref{general eq}, then let us prove that this  implies that $v_i$ is a subsolution of \eqref{general eq2}. The converse implication can be similarly shown. 

Assume that there exist $(x_0, t_0)\in Q_i$ and $\phi\in C^2(\ol{Q}_i)$ such that 
\[
\max_{\ol{Q}_i}(v_i-\phi)=(v_i-\phi)(x_0, t_0)=0.
\]
In other words, we have 
\[
v_i(x_0, t_0)=\phi(x_0, t_0), \quad v_i(x, t)\leq \phi(x, t)\quad \text{for all $(x, t)\in Q_i$.}
\]
Since $v_i>0$ in $Q_i$, it follows that 
\[
u_i(x_0, t_0^{1/\alpha})=\phi^{1/ p}\left(x_0, t_0\right), \quad u_i(x, t^{1/\alpha})\leq \phi^{1/p}(x, t)\quad \text{for all $(x, t)\in Q_i$.}
\]
This implies that $u_i(x, t)-\psi(x, t)$ attains a maximum over $Q_i$ at $(x_0, t_0^{1/\alpha})$, where
$\psi(x, t)=\phi^{1\over p}(x, t^\alpha)$. 

Suppose that $\nabla \phi(x_0, t_0)\neq 0$.  Then $\nabla \psi(x_0, t_0)\neq 0$. Since $u_i$ is a subsolution of \eqref{general eq}, we see that 
\[
\partial_t \psi+F_i\left(x_0, t_0^{1/ \alpha}, u_i, \nabla \psi, \nabla^2 \psi\right)\leq 0 \quad \text{at $(x_0, t_0^{1/ \alpha})$.}
\]
 By direct calculations it follows that at $(x_0, t_0)$
\begin{equation}\label{eq sub equiv1}
{\alpha\over p}v_i^{{1\over p}-1}t^{1-{1\over \alpha}}\partial_t \phi +F_i\bigg(x_0,\ t_0^{1\over \alpha},\ v_i^{1\over p},\ {1\over p}v_i^{{1\over p}-1}\nabla \phi,\ {1\over p} v_i^{{1\over p}-1} \nabla^2 \phi  +{1-p\over p^2}  v_i^{{1\over p}-2}   \nabla \phi\otimes \nabla \phi\bigg)\leq 0.
\end{equation}
Multiplying \eqref{eq sub equiv1} by $p v_i(x_0, t_0)^k$, we obtain 
\[
v_i(x_0, t_0)^{{1\over p}-1+k} t^{1-{1\over \alpha}} \partial_t\phi_i(x_0, t_0)+{p\over \alpha}G_{i, k}^{p,\alpha}(x_0, t_0, v_i(x_0, t_0), \nabla \phi_i(x_0, t_0), \nabla^2 v_i(x_0, t_0))\leq 0.
\]

If $\nabla \phi(x_0, t_0)=0$, we have $\nabla\psi(x_0, t_0^{1/\alpha})=0$. Using Definition \ref{def2}, we assume $\nabla^2 \phi(x_0, t_0)=0$, which is equivalent to $\nabla^2\psi\left(x_0, t_0^{1/\alpha}\right)=0$. 
We thus can apply the definition of subsolution on $u_i$ to obtain 
\[
\partial_t\psi(x_0, t_0^{1/\alpha})+h_i\left(x_0, t_0^{1/\alpha}, u_i(x_0, t_0^{1/\alpha})\right)\leq 0,
\]
which yields
\[
v_i(x_0, t_0)^{{1\over p}-1}t_0^{1-{1\over \alpha}}\partial_t \phi(x_0, t_0)+{p\over \alpha}\tilde{h}_i\left(x_0, t_0^{1/ \alpha}, v_i(x_0, t_0)\right)\leq 0. 
\]
The proof of the case $p> 0$  and $u_i$ is a subsolution is thus complete. The cases $p=0$ and $p<0$ can be treated similarly, and the same for the symmetric case when $u_i$ is a supersolution.
\end{proof}

If $F$ is mildly singular, it is not difficult to see that 
\begin{equation}\label{singularity limit1}
G_{i, k}^{p,\alpha}(x, t, r, \xi, X)\to r^k h_i\left(x, t^{1\over \alpha}, r^{1\over p}\right) \quad \text{as $\xi\to 0,\ X\to 0$}
\end{equation}
locally uniformly for all $(x, t, r)\in \ol{Q}_i\times [0, \infty)$ and all $i=\lambda, 1, \ldots, m$. In other words, the operator $G_{i, k}^{p,\alpha}$  satisfies the same properties as in \eqref{f0}.  We are thus able to apply Definition \ref{def2} to define the sub- and supersolutions of \eqref{general eq3}. Let us denote
\begin{equation}\label{operator-k0}
\tilde{h}_{i}(x, t, r)=r^k h_i\left(x, t^{1\over \alpha}, r^{1\over p}\right)
\end{equation}
for all $(x, t, r)\in \ol{Q}_i\times [0, \infty)$ and $i=\lambda, 1, \ldots, m$.

\begin{prop}
Assume that \eqref{f0} holds for each $i=\lambda, 1, \ldots, m$. Suppose that there exists $k\in \R$ such that 
{\rm (H2)} holds. Then $\tilde{h}_i$ given by \eqref{operator-k0} satisfies 
\begin{equation}\label{singularity convexity}
\sum_i \lambda_i \tilde{h}_i(x_i, t_i, r_i)\geq \tilde{h}_0\left(\sum_i \lambda_i x_i,  \sum_i \lambda_i t_i, \sum_i \lambda_i r_i\right)
\end{equation}
for any $\lambda\in \Lambda$, $(x_i, t_i)\in Q_i$, and $r_i>0$. 
\end{prop}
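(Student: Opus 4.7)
The plan is to derive the inequality directly from assumption (H2) by specializing the matrix arguments trivially and then passing the gradient argument to zero to exploit the mild singularity condition \eqref{f0}.

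First, I would fix arbitrary $(x_i, t_i, r_i) \in Q_i \times (0, \infty)$ for $i = 1, \ldots, m$ and set $x = \sum_i \lambda_i x_i$, $t = \sum_i \lambda_i t_i$, $r = \sum_i \lambda_i r_i$, so that \eqref{vertex} is satisfied. The key observation is to choose $X_i = 0$ for every $i = 1, \ldots, m$ and $Y = 0$: then both the block-diagonal matrix on the left of \eqref{key2} and the rank-one block matrix on the right vanish identically, so \eqref{key2} is trivially verified regardless of $\sgn^\ast(p)$. Therefore, for every $\xi \in \R^n \setminus \{0\}$, hypothesis (H2) applies and gives
\begin{equation*}
G_{\lambda, k}^{p,\alpha}(x, t, r, \xi, 0) \leq \sum_{i=1}^m \lambda_i G_{i, k}^{p,\alpha}(x_i, t_i, r_i, \xi, 0).
\end{equation*}

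The second step is to let $\xi \to 0$ in this inequality while keeping all other arguments fixed. Here I would invoke \eqref{singularity limit1}, which is precisely the statement that for each $i$,
\begin{equation*}
G_{i, k}^{p,\alpha}(x_i, t_i, r_i, \xi, 0) \longrightarrow r_i^{k}\, h_i\!\left(x_i, t_i^{1/\alpha}, r_i^{1/p}\right) = \tilde{h}_i(x_i, t_i, r_i) \quad \text{as } \xi \to 0,
\end{equation*}
and analogously for the $\lambda$-indexed term on the left; this convergence follows from \eqref{f0} because, under the transformation \eqref{operator-k}, sending $\xi \to 0$ drives both the gradient argument $\tfrac{1}{p}r^{1/p-1}\xi$ and the Hessian argument $\tfrac{1-p}{p^2}r^{1/p-2}\xi \otimes \xi$ (or their $p=0$ counterparts) to zero, and $F_i$ admits the continuous extension $h_i$ at such points. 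Passing to the limit in the displayed inequality yields exactly \eqref{singularity convexity} (with the obvious reading of $\tilde{h}_0$ as $\tilde{h}_\lambda$, consistent with the notation of (H2)).

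Since the matrix condition \eqref{key2} becomes a trivial $0 \leq 0$ with the chosen $X_i = Y = 0$, and the mild-singularity convergence \eqref{singularity limit1} is already available, I do not anticipate any serious obstacle; the only point requiring a little care is to ensure that the convergence in \eqref{singularity limit1} is genuinely uniform in a small neighborhood of $\xi = 0$, so that the inequality survives the limit — but this is immediate from the continuity of the extension guaranteed by \eqref{f0}.
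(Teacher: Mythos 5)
Your proposal is correct and follows essentially the same route as the paper: both fix $X_i=Y=0$ so that \eqref{key2} is trivially satisfied, apply (H2) for nonzero $\xi$, and then send $\xi\to 0$ using the mild-singularity convergence \eqref{singularity limit1} coming from \eqref{f0}. The paper merely phrases the limiting step via an $\vep$--$\xi_\vep$ argument, which is equivalent to your direct passage to the limit.
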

\begin{proof}
Since 
\[
G_{\lambda,k}^{p,\alpha}(x, t, r, \xi, 0)\to \tilde{h}_0(x, t, r)\quad \text{locally uniformly as $\xi\to 0$}
\]
for any $\vep>0$, there exists $\xi_\vep\in \R^n\setminus \{0\}$ such that 
\begin{equation}\label{singularity limit2}
G_{i, k}^{p,\alpha}\left(\sum_i \lambda_i x_i,  \sum_i \lambda_i t_i, \sum_i \lambda_i r_i, \xi_\vep, 0\right)\geq \tilde{h}_0\left(\sum_i \lambda_i x_i,  \sum_i \lambda_i t_i, \sum_i \lambda_i r_i\right)-\vep.
\end{equation}
Since \eqref{key2} clearly holds with $Y=X_i=0$ for all $i=1, 2, \ldots, ,n+2$, by $\text{(H2)}$ we get 
\[
\sum_i \lambda_i G_{i, k}^{p,\alpha}(x_i, t_i, r_i, \xi_\vep, 0)\geq G_{\lambda,k}^{p,\alpha}\left(\sum_i \lambda_i x_i,  \sum_i \lambda_i t_i, \sum_i \lambda_i r_i, \xi_\vep, 0\right).
\]
which by \eqref{singularity limit2} yields
\[
\sum_i \lambda_i G_{i, k}^{p,\alpha}(x_i, t_i, r_i, \xi_\vep, 0)\geq \tilde{h}_0\left(\sum_i \lambda_i x_i,  \sum_i \lambda_i t_i, \sum_i \lambda_i r_i\right)-\vep
\]
Sending $\vep\to 0$, we obtain \eqref{singularity convexity} by \eqref{singularity limit1} and \eqref{operator-k0}. 
\end{proof}

When $p>0$, we easily see that 
$v_i$ satisfies the same initial and boundary conditions as $u_i$. 
Therefore we can write the Cauchy-Dirichlet problem for $v_i$ $(i=\lambda, 1, \ldots, m)$ as
\begin{numcases}{}
v^{{1\over p}-1+k} t^{1-{1\over \alpha}} \partial_t v+{p\over \alpha}G_{i, k}^{p,\alpha}\left(x, t, v, \nabla v, \nabla^2 v\right)=0 &\text{in $Q_i$,}\label{general eq-k}\\
v=0 &\text{on $\partial Q_i$.}\label{bdry2}
\end{numcases}
Since we assume that a comparison principle holds for sub- and supersolutions of \eqref{general eq}--\eqref{bdry} that are positive in $\Omega\times (0, \infty)$,  Lemma \ref{lem transform} implies that positive sub- and supersolutions of \eqref{general eq-k}--\eqref{bdry2} also enjoy a comparison principle (which is what we truly need).

When $p\leq0$,   in place of  \eqref{bdry2}, 
$v_i$ satisfies a blow-up boundary and initial condition (precisely $v_i\to-\infty$ for $p=0$, while $v_i\to+\infty$ when $p<0$ on $\partial Q_i$), which enter into the case of state constraints boundary conditions. Then we have to go back to $u_i$ and use the comparison principle for the problem satisfied by $u_\lambda$.

We conclude this section by pointing out the equivalence between
\eqref{time monotone2} and the condition 
\begin{equation}\label{time monotone}
v_i(x, t)\geq v_i(x, s)\quad \text{for any $x\in \Omega_i$, $t\geq s\geq 0$, and $i=1, \ldots, m$}.
\end{equation}
The monotonicity with respect to time will be used in the proof of Theorem \ref{thm main}.

\section{The Minkowski Convolution}
\subsection{Achievability in the interior}

For any given $\lambda\in \Lambda$ and $(x, t)=(\hat{x}, \hat{t})$, we show that the supremum in \eqref{envelope0} can be attained at some $(x_i, t_i)\in Q_i$ for $i=1, 2, \ldots, m$. Our proof is essentially the same of \cite[Lemma 3.1]{IshSa3}. We give the details for the sake of completeness. 

\begin{lem}[Interior maximizers for the envelope]\label{lem achievability}
Suppose that the assumptions of Theorem~{\rm\ref{thm main}} hold. 
Then for any $(\hat{x}, \hat{t})\in Q_\lambda$, there exist $(x_1, t_1)\in {Q}_1$, $(x_2, t_2)\in {Q}_2$, \ldots, $(x_m, t_m)\in {Q}_m$ such that 
\begin{equation}\label{achieve0}
\hat{x}=\sum_{i=1}^m \lambda_i x_i, \ (\hat{t})^\alpha=\sum_{i=1}^m \lambda_i t_i^\alpha,
\end{equation}
and 
\begin{equation}\label{achieve1}
U_{p,\lambda}(\hat{x}, \hat{t})=\left(\sum_{i=1}^m \lambda_i u_i^p(x_i, t_i)\right)^{1\over p}.
\end{equation}
\end{lem}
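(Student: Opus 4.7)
My plan is to separate the argument into two independent steps: (a) the supremum in \eqref{envelope0} is attained on the closed feasible set, and (b) every maximizer necessarily lies in the interior of each $Q_i$. Step (a) is a standard compactness-plus-continuity argument, while step (b) is where the structural hypotheses of Theorem~\ref{thm main} enter critically, via the collapse of $M_p$ to zero when $p\leq 0$ and via the superlinear growth assumption \eqref{growth behavior} when $0<p<1$.

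For (a), fix $(\hat{x},\hat{t})\in Q_\lambda$ and set
\[
K := \biggl\{(x_i,t_i)_{i=1}^m \in \prod_{i=1}^m \overline{Q_i}\ :\ \sum_{i=1}^m \lambda_i x_i = \hat{x},\ \sum_{i=1}^m \lambda_i t_i^\alpha = \hat{t}^\alpha\biggr\}.
\]
Each $\overline{\Omega_i}$ is compact and the time constraint forces $0\leq t_i\leq (\hat{t}^\alpha/\lambda_i)^{1/\alpha}$, so $K$ is compact. Each $u_i$ extends continuously to $\overline{Q_i}$ with vanishing boundary value, and under the conventions of \eqref{pmeandef} the function $M_p(\cdot;\lambda)$ is continuous on $[0,\infty)^m$ for every $p\in(-\infty,1)$. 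Hence the functional $\Phi(x,t):=M_p(u_1(x_1,t_1),\ldots,u_m(x_m,t_m);\lambda)$ is continuous on $K$ and attains its supremum at some $(x_i^*,t_i^*)_{i=1}^m\in K$.

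For (b), suppose toward a contradiction that $(x_{i_0}^*,t_{i_0}^*)\in\partial Q_{i_0}$ for some $i_0$, so $u_{i_0}(x_{i_0}^*,t_{i_0}^*)=0$. If $p\leq 0$, the convention gives $\Phi(x^*,t^*)=0$; yet $K$ admits fully interior decompositions (take $x_i\in\Omega_i$, $t_i>0$, possible since $\hat{x}\in\Omega_\lambda$ and $\hat{t}>0$) for which $\Phi>0$, contradicting maximality. If $0<p<1$, I would construct a strictly larger competitor by local variation. Since a configuration with every coordinate on $\partial Q_i$ gives $\Phi=0$, some $j_0\neq i_0$ satisfies $(x_{j_0}^*,t_{j_0}^*)\in Q_{j_0}$. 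For small $\rho>0$, push
\[
x_{i_0}' = x_{i_0}^*+\tilde{\nu}_{i_0}(x_{i_0}^*)\rho,\qquad (t_{i_0}')^\alpha = (t_{i_0}^*)^\alpha+\mu(t_{i_0}^*)\rho,
\]
into $Q_{i_0}$ and compensate on index $j_0$ by
\[
x_{j_0}' = x_{j_0}^*-\tfrac{\lambda_{i_0}}{\lambda_{j_0}}\tilde{\nu}_{i_0}(x_{i_0}^*)\rho,\qquad (t_{j_0}')^\alpha = (t_{j_0}^*)^\alpha-\tfrac{\lambda_{i_0}}{\lambda_{j_0}}\mu(t_{i_0}^*)\rho,
\]
leaving every other coordinate fixed. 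For $\rho$ small this remains in $K$ with $(x_{i_0}',t_{i_0}')\in Q_{i_0}$ and $(x_{j_0}',t_{j_0}')\in Q_{j_0}$. Then \eqref{growth behavior} yields $\lambda_{i_0}u_{i_0}^p(x_{i_0}',t_{i_0}')/\rho\to+\infty$ as $\rho\to 0^+$, while local Lipschitz regularity of $u_{j_0}$ in space, together with continuity in time near the interior point $(x_{j_0}^*,t_{j_0}^*)$, gives $\lambda_{j_0}\bigl|u_{j_0}^p(x_{j_0}',t_{j_0}')-u_{j_0}^p(x_{j_0}^*,t_{j_0}^*)\bigr|=O(\rho)$. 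Comparing $\sum_i\lambda_i u_i^p(x_i',t_i')$ with $\Phi(x^*,t^*)^p$ shows the former strictly exceeds the latter for small $\rho$, contradicting maximality.

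The main obstacle I anticipate is controlling the loss at the $j_0$-coordinate by $O(\rho)$ when the time perturbation is active. Since $t_{j_0}^*>0$, a Taylor expansion of $s\mapsto s^{1/\alpha}$ around $s=(t_{j_0}^*)^\alpha$ gives $|t_{j_0}'-t_{j_0}^*|=O(\rho)$; time continuity of $u_{j_0}$ near the interior point (standard for viscosity solutions of \eqref{general eq} with spatial Lipschitz regularity) then yields the desired bound. If one prefers to bypass any subtle time regularity, one can split into the subcases $\mu(t_{i_0}^*)=0$ (pure spatial perturbation) and $\tilde{\nu}_{i_0}(x_{i_0}^*)=0$ (pure temporal perturbation, where hypothesis (i) plus continuity in $t$ suffices), treating each independently. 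In every case, the superlinear gain furnished by \eqref{growth behavior} absorbs the at most linear loss.
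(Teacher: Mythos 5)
Your overall route is the paper's: attainment by compactness and continuity of $M_p$ on the closed feasible set, then a contradiction argument that rules out boundary maximizers, first disposing of the all-boundary configuration (and of $p\leq 0$, where the convention $M_p=0$ does the work — your treatment of that case is in fact more explicit than the paper's), and then, for $0<p<1$, pushing the boundary coordinate inward along $\tilde{\nu}_{i_0}$ and $\mu$, compensating at an interior index $j_0$, and playing the superlinear gain from \eqref{growth behavior} against the loss at $j_0$. This is exactly the perturbation in the paper's proof (there with increments $\rho/\lambda_1$, $\rho/\lambda_2$, which changes nothing).

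The genuine gap is in the loss estimate at $j_0$ when the time variable must be decreased, i.e.\ when $t_{i_0}^*=0$ so $\mu(t_{i_0}^*)=1$. You claim that spatial Lipschitz regularity \emph{plus mere continuity in time} gives $\lambda_{j_0}\bigl|u_{j_0}^p(x_{j_0}',t_{j_0}')-u_{j_0}^p(x_{j_0}^*,t_{j_0}^*)\bigr|=O(\rho)$. Continuity only gives $o(1)$, and that is not enough: the gain furnished by \eqref{growth behavior} is only superlinear, $\lambda_{i_0}u_{i_0}^p(x_{i_0}',t_{i_0}')\geq C(\rho)\rho$ with $C(\rho)\to\infty$, which can be as small as $\rho\log(1/\rho)$; a loss of size, say, $\sqrt{\rho}$ (which is all one gets even from the usual H\"older-$1/2$-in-time regularity of spatially Lipschitz solutions) would not be absorbed. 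So the step as justified would fail; one needs a loss bound that is genuinely linear in $\rho$, i.e.\ a local Lipschitz (or at least linear-modulus) bound in time at the interior point. This is precisely how the paper closes the argument: it invokes $|\nabla v_{j_0}|+|\partial_t v_{j_0}|\leq M$ a.e.\ in a neighborhood of the interior maximizer, giving a loss $\leq 2M\rho$ which is then beaten by choosing the gain constant larger than $2M+1$. Your proposed fallback does not repair this: monotonicity (i) only fixes the \emph{sign} of the time contribution at $j_0$, not its size, so ``(i) plus continuity in $t$'' still yields only $o(1)$; moreover the subcase split $\mu(t_{i_0}^*)=0$ versus $\tilde{\nu}_{i_0}(x_{i_0}^*)=0$ omits the corner case $x_{i_0}^*\in\partial\Omega_{i_0}$, $t_{i_0}^*=0$, where neither a purely spatial nor a purely temporal push produces any gain (the solution vanishes on the lateral boundary and at $t=0$), so the combined perturbation — and hence the time loss at $j_0$ — cannot be avoided there. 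The fix is simply to assume/derive the local space-time Lipschitz bound for $v_{j_0}$ near $(x_{j_0}^*,\tau_{j_0}^*)$, as the paper does, rather than appealing to continuity.
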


\begin{proof}
Let us only discuss the case $p>0$, since the results with $p<0$ or $p=0$ clearly hold. 
In view of the compactness of the set 
\[
\left\{(y_1, s_1, y_2, s_2, \ldots, y_m, s_m)\in \prod_{i=1}^m \ol{Q}_i: \hat{x}=\sum_{i=1}^m \lambda_i y_i, \ (\hat{t})^\alpha=\sum_{i=1}^m \lambda_i s_i^\alpha\right\}
\]
and the continuity of 
\[
(y_1, s_1, y_2, s_2, \ldots, y_m, s_m)\mapsto \left(\sum_{i=1}^m \lambda_i u^p_i(x_i, t_i)\right)^{1\over p},
\]
we can find $(x_i, t_i)\in \ol{Q}_i$ for $i=1, 2, \ldots, m$ such that \eqref{achieve0} and \eqref{achieve1} hold. 

Let $\hat{\tau}=(\hat{t})^\alpha$ and $\tau_i=t_i^\alpha$ and recall that $v_i$ is given by \eqref{change variable}. We have 
 \begin{equation}\label{convex comb}
\sum_{i=1}^m \lambda_i x_i=\hat{x}, \quad \sum_{i=1}^m \lambda_i \tau_i=\hat{\tau},
\end{equation}
\begin{equation}\label{convex comb0}
U_{p,\lambda}(\hat{x}, \hat{t})^p=V_{p,\lambda}(\hat{x}, \hat{\tau})=\sum_{i=1}^m \lambda_i v(x_i, \tau_i).
\end{equation}
It thus suffices to show that $(x_i, \tau_i)\in {Q}_i$ for all $i=1, 2, \ldots, m$.

 Assume by contradiction that $(x_i, \tau_i)\in\partial Q_i$ for some $i=1, 2, \ldots, m$.  We derive a contradiction in the following two cases. 

\noindent \textbf{Case 1.}  Suppose that $(x_i, \tau_i)\in \partial Q_i$ for all $i=1, 2, \ldots, m$, 
then by \eqref{bdry} and \eqref{achieve1} we have $U_{p,\lambda}(\hat{x}, \hat{t})=0$, which is a contradiction, since $U_{p,\lambda}(\hat{x}, \hat{t})>0$ for every $(\hat{x}, \hat{t})\in Q_\lambda$.

\noindent \textbf{Case 2.} Assume, without loss of generality, that $(x_1, \tau_1)\in \partial Q_1$ and $(x_2, \tau_2)\in Q_2$. 
Take $\rho\in (0, 1)$ and put 
\[
\tilde{x}_1=x_1+{\rho\over \lambda_1} \tilde{\nu}_1, 
\quad \tilde{x}_2=x_2-{\rho\over \lambda_2}\tilde{\nu}_1, 
\quad \tilde{x}_i=x_i \quad (i=3, 4, \ldots, m),
\]
\[
\tilde{\tau}_1=\tau_1+\mu(t_1){\rho\over \lambda_1}, 
\quad \tilde{\tau}_2=\tau_2-\mu(t_1){\rho\over \lambda_2}, 
\quad \tilde{\tau}_i=\tau_i \quad (i=3, 4, \ldots, m).
\]
Then it is clear that $\sum_i \lambda_i\tilde{x}_i=\sum_i\lambda_i x_i=\hat{x}$, $\sum_i\lambda_i \tilde{\tau}_i=\sum_i\lambda_i \tau_i=\hat{\tau}$. 
By taking $\rho>0$ small enough we also have $(\tilde{x}_1, \tilde{\tau}_1)\in Q_1, (\tilde{x}_2, \tilde{\tau}_2)\in Q_2$. 

 Adopting the local Lipschitz regularity of $u_2$, we get $M>0$ and $\delta_1>0$ such that 
\[
|\nabla v_2|+|\partial_t v_2|\leq M \quad \text{a.e. in $B_{\delta_1}(x_2)\times (\tau_2-\delta_1, \tau_2+\delta_1)\subset Q_2$.}
\]
It follows that 
\begin{equation}\label{achieve2}
\lambda_2 v_2(\tilde{x}_2, \tilde{\tau}_2)-\lambda_2 v_2(x_2, \tau_2)\geq -\lambda_2 M\left(|\tilde{x}_2-x_2|+ |\tilde{\tau}_2-\tau_2|\right)\geq -2M\rho. 
\end{equation}

On the other hand, the condition \eqref{growth behavior} implies that
\[
v_1(\tilde{x}_1, \tilde{\tau}_1)-v_1(x_1, \tau_1)=u_1\left(x_1+\tilde{\nu}_1(x_1){\rho\over \lambda_1}, t_1+\mu(t_1)\left({\rho\over \lambda_1}\right)^{1\over \alpha}\right)^p\geq (2M+1){\rho\over \lambda_1},
\]
which yields that 
\begin{equation}\label{achieve3}
\lambda_1v(\tilde{x}_1, \tilde{t}_1)-\lambda_1 v(x_1, t_1)\geq (2M+1)\rho
\end{equation}
when $\rho$ is sufficiently small. By \eqref{achieve2} and \eqref{achieve3}, we have 
\[
\begin{aligned}
\sum_{i} \lambda_i v_i(\tilde{x}_i, \tilde{\tau}_i)&\geq \lambda_1\left(v_1(\tilde{x}_1, \tilde{\tau}_1)- v_1(x_1, \tau_1)\right)+\lambda_2\left(v_2(\tilde{x}_2, \tilde{\tau}_2)-v_2(x_2, \tau_2)\right)+\sum_i \lambda_i v_i(x_i, t_i)\\
&>\sum_{i=1}^m \lambda_i v_i(x_i, \tau_i)=U_{p,\lambda}(\hat{x}, \hat{t})^p,
\end{aligned}
\] 
which contradicts \eqref{convex comb0}. 
\end{proof}

\subsection{A key lemma}
To show our main result, instead of using $U_{p,\lambda}$ defined in \eqref{envelope0}, we consider the Minkowski convolution $V_{p,\lambda}$ for $v_i$ as given in \eqref{envelope2}.  


It turns out that the following lemma plays a central role in the proof of Theorem \ref{thm main}. 

\begin{lem}[Minkowski convolution preserves subsolutions]\label{lem concave}
 Fix $\lambda\in \Lambda_m$.  Assume that $\Omega_i$ is a bounded smooth domain in $\R^n$ for any $i=1, 2, \ldots, m$. Let $\Omega_\lambda$ be the Minkowski combination as defined in \eqref{minkowski domain}. Assume that $F_i$ satisfies \eqref{f0} for all $i=\lambda, 1, \ldots, m$. Let $0<\alpha\leq 1$ and $p<1$. Suppose that there exists $k\in \R$ such that {\rm(H1)} and {\rm(H2)} hold, where $G_{i, k}^{p,\alpha}$ is given by \eqref{operator-k}. Then:
 \begin{itemize}
\item \underline{Case $0\leq p<1$}. 
\smallskip

Let $v_i$ be a nondecreasing in time upper semicontinuous subsolution of \eqref{general eq-k} for every $i=1, 2, \ldots, m$. 
Suppose that for any fixed $(\hat{x}, \hat{t})\in Q$, the supremum in the definition of $V_{p,\lambda}$ in \eqref{envelope2} at $(\hat{x}, \hat{\tau})$ is attained at some $(x_i, \tau_i)\in Q_i$  for $i=1, 2, \ldots, m$, in other words,  \eqref{convex comb} and \eqref{convex comb0} hold. Then $V_{p,\lambda}$ satisfies the subsolution property for \eqref{general eq-k} at $(\hat{x}, \hat{t})$.  

 \medskip

\item \underline{Case $p<0$}.
\smallskip

Let $v_i$ be a nonincreasing in time lower semicontinuous supersolution of \eqref{general eq-k} for every $i=1, 2, \ldots, m$. 
Suppose that for any fixed $(\hat{x}, \hat{t})\in Q$, the infimum in the definition of $V_{p,\lambda}$ in \eqref{envelope2} at $(\hat{x}, \hat{\tau})$ is attained at some $(x_i, \tau_i)\in Q_i$  for $i=1, 2, \ldots, m$, in other words,  \eqref{convex comb} and \eqref{convex comb0} hold. Then $V_{p,\lambda}$ satisfies the supersolution property for \eqref{general eq-k} at $(\hat{x}, \hat{t})$.   
\end{itemize}
\end{lem}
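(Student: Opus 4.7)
The plan is to apply the parabolic theorem of sums (Crandall--Ishii--Lions) to the joint variational problem defining $V_{p,\lambda}$ and then assemble the pointwise inequality using hypotheses (H1)--(H2). The case $p<0$ is completely symmetric after interchanging suprema and infima, USC and LSC, subsolutions and supersolutions, local maxima and local minima, and reversing the matrix inequality in \eqref{key2} in accordance with $\sgn^*(p)=-1$; we therefore restrict attention to $0\le p<1$.

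Fix $(\hat x,\hat t)\in Q_\lambda$ and set $\hat\tau=\hat t^\alpha$. Let $\phi\in C^2$ be a test function with $V_{p,\lambda}-\phi$ attaining a local maximum at $(\hat x,\hat\tau)$, and let $(x_i,\tau_i)\in Q_i$ be the interior attainers of the supremum, so that $\sum_i\lambda_i x_i=\hat x$, $\sum_i\lambda_i\tau_i=\hat\tau$, and $V_{p,\lambda}(\hat x,\hat\tau)=\sum_i\lambda_i v_i(x_i,\tau_i)$. Combining the lower bound $\sum_i\lambda_i v_i(y_i,s_i)\le V_{p,\lambda}(\sum_i\lambda_i y_i,\sum_i\lambda_i s_i)$ from the very definition of $V_{p,\lambda}$ with the test function inequality, the auxiliary function
\[
\Psi(y_1,s_1,\dots,y_m,s_m):=\sum_{i=1}^m\lambda_i v_i(y_i,s_i)-\phi\!\left(\sum_i\lambda_i y_i,\sum_i\lambda_i s_i\right)
\]
attains a local maximum on $\prod_i\overline{Q_i}$ at $(x_1,\tau_1,\dots,x_m,\tau_m)$, located in the interior of each factor. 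Apply the parabolic theorem of sums \cite[Theorem~8.3]{CIL} to the USC functions $\lambda_1 v_1,\dots,\lambda_m v_m$ with perturbation $\phi(\sum_i\lambda_i y_i,\sum_i\lambda_i s_i)$. Its partial derivatives at the max are $\lambda_i\partial_t\phi(\hat x,\hat\tau)$ in each $s_i$-slot and $\lambda_i\nabla\phi(\hat x,\hat\tau)$ in each $y_i$-slot, and its spatial Hessian with respect to $(y_1,\dots,y_m)$ is the block matrix $A=(\lambda_i\lambda_j Y)_{i,j}$ with $Y:=\nabla^2\phi(\hat x,\hat\tau)$. Rescaling the resulting jets of $\lambda_i v_i$ back to jets of $v_i$ and sending the CIL penalization parameter to zero, one obtains a common pair $\tau^*:=\partial_t\phi(\hat x,\hat\tau)$, $\xi:=\nabla\phi(\hat x,\hat\tau)$ and matrices $X_i\in\S^n$ such that $(\tau^*,\xi,X_i)\in\overline{P}^{2,+}v_i(x_i,\tau_i)$ and
\[
\operatorname{diag}(\lambda_1 X_1,\dots,\lambda_m X_m)\le (\lambda_i\lambda_j Y)_{i,j}.
\]
This is precisely the matrix condition in \eqref{key2} for $\sgn^*(p)=1$, while \eqref{vertex} is met with $r_i:=v_i(x_i,\tau_i)$.

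For each $i$ the subsolution property of $v_i$ for \eqref{general eq-k} against this semijet reads
\[
v_i(x_i,\tau_i)^{1/p-1+k}\tau_i^{1-1/\alpha}\tau^*+\tfrac{p}{\alpha}G_{i,k}^{p,\alpha}\bigl(x_i,\tau_i,v_i(x_i,\tau_i),\xi,X_i\bigr)\le 0.
\]
Multiply by $\lambda_i$ and sum. Hypothesis (H2) bounds the combination of the $G$-terms from below by $G_{\lambda,k}^{p,\alpha}(\hat x,\hat\tau,V_{p,\lambda}(\hat x,\hat\tau),\xi,Y)$. The monotonicity of each $v_i$ in time transfers to $V_{p,\lambda}$, so every super-jet has $\tau^*\ge 0$; together with hypothesis (H1), which is equivalent to the convexity on $(0,\infty)^2$ of the coefficient $g(r,\tau)=r^{1/p-1+k}\tau^{1-1/\alpha}$ (replaced by $g(v,\tau)=e^{(k+1)v}\tau^{1-1/\alpha}$ when $p=0$), Jensen's inequality delivers
\[
V_{p,\lambda}(\hat x,\hat\tau)^{1/p-1+k}\hat\tau^{1-1/\alpha}\le\sum_i\lambda_i v_i(x_i,\tau_i)^{1/p-1+k}\tau_i^{1-1/\alpha}.
\]
Multiplying by $\tau^*\ge 0$ and using $p/\alpha\ge 0$ then yields the required viscosity subsolution inequality for $V_{p,\lambda}$ at $(\hat x,\hat\tau)$. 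The mildly singular case $\xi=0$, $Y=0$ is handled through Definition~\ref{def2} using the analogous convexity bound \eqref{singularity convexity} for $\tilde h_i$.

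The chief obstacle is the extraction of matrices $X_i$ verifying \eqref{key2}. A naive ``freeze all but one coordinate'' argument only produces $(\tau^*,\xi,\lambda_i Y)\in P^{2,+}v_i(x_i,\tau_i)$, and the choice $X_i=\lambda_i Y$ does \emph{not} satisfy \eqref{key2} in general, since the quadratic defect $\langle Y\sum_i\lambda_i\eta_i,\sum_i\lambda_i\eta_i\rangle-\sum_i\lambda_i^2\langle Y\eta_i,\eta_i\rangle$ is indefinite for arbitrary $Y$. The full force of the parabolic theorem of sums is needed to supply refined Hessians whose block-diagonal combination is dominated precisely by $(\lambda_i\lambda_j Y)_{i,j}$, thereby producing the crucial off-diagonal correction that the naive approach misses.
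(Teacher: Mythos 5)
Your argument follows the paper's proof essentially verbatim: the same reduction to the maximality of $(y_1,s_1,\dots,y_m,s_m)\mapsto\sum_i\lambda_i v_i(y_i,s_i)-\phi(\sum_i\lambda_i y_i,\sum_i\lambda_i s_i)$ at the interior attainers, the same application of the Crandall--Ishii lemma to produce the semijets and the block-matrix inequality \eqref{key2}, and the same use of (H1) via Jensen, (H2), time monotonicity, and Definition~\ref{def2} with \eqref{singularity convexity} for the singular case $\nabla\phi=0$. The one imprecision is the claim that one can ``send the CIL penalization parameter to zero'' and still retain matrices $X_i$ satisfying the clean inequality $\operatorname{diag}(\lambda_1X_1,\dots,\lambda_mX_m)\le(\lambda_i\lambda_jY)_{i,j}$ --- the theorem of sums only gives the bound $Z+\varepsilon Z^2$ and the matrices need not converge as $\varepsilon\to0$; the paper instead sets $\tilde A_i=A_i-C\varepsilon I$, applies (H2) to the $\tilde A_i$ for each fixed $\varepsilon$, and passes to the limit in the resulting scalar inequality using the continuity of $G_{i,k}^{p,\alpha}$, a standard repair that your argument implicitly requires.
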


Using Lemma \ref{lem concave}, we can complete the proof of Theorem \ref{thm main}. 

\begin{proof}[Proof of Theorem~{\rm\ref{thm main}}]
Fix $\lambda\in \Lambda$ arbitrarily. Let $U_{p,\lambda}$, $v_i$, and $V_{p,\lambda}$ 
be given respectively by \eqref{envelope0}, \eqref{change variable}, and \eqref{envelope2} ($i=1, 2, \ldots, m$). 
Adopting Lemma \ref{lem transform}, we can show that if $p\geq 0$ then $v_i$ is a subsolution of \eqref{general eq-k}, while for $p<0$ it is a supersolution of \eqref{general eq-k} for any $i=1, 2, \ldots, m$. 

For any $(\hat{x}, \hat{t})\in Q$, by Lemma \ref{lem achievability} we see that the maximizers in the definition of $U_{p,\lambda}(\hat{x}, \hat{t})$ appear in $Q_i$ for all $i=1, 2, \ldots, m$. This enables us to apply Lemma \ref{lem concave} with $\hat{\tau}=\hat{t}^{1/\alpha}$ to deduce that $V_{p,\lambda}$ is a subsolution or a supersolution, according to the value of $p$, of \eqref{general eq-k} with $i=\lambda$. Adopting Lemma \ref{lem transform} again yields that $U_{p,\lambda}$ is a subsolution of \eqref{general eq} with $i=\lambda$. 
\end{proof}

We next present a proof of of Lemma \ref{lem concave}.

\begin{proof}[Proof of Lemma~{\rm\ref{lem concave}}] 
Let us present the proof in details in the case $p>0$. The cases $p=0$ and $p<0$ can be treated similarly.

Suppose that $\phi\in C^2(\overline{Q})$ is a test function of $V_{p,\lambda}$ at $(\hat{x}, \hat{\tau})\in Q_\lambda$, that is, 
\[
(V_{p,\lambda}-\phi)(x, \tau)\leq (V_{p,\lambda}-\phi)(\hat{x}, \hat{\tau})=0
\] 
for all $(x, \tau)\in \overline{Q}_0$. Due to the maximality of 
\begin{equation}\label{maximality2}
(y_1, s_1, \ldots, y_m, s_m)\mapsto \sum_{i=1}^m \lambda_i v_i(y_i, s_i)-V_{p,\lambda}\left(\sum_{i=1}^m \lambda_i y_i, \sum_{i=1}^m\lambda_i s_i\right)
\end{equation}
over $\prod_{i=1}^m \ol{Q}_i$  at $(x_1, \tau_1, \ldots, x_m, \tau_m)\in \prod_{i=1}^m Q_i$, we see that 
\begin{equation}
(y_1, s_1, \ldots, y_m, s_m)\mapsto \sum_{i=1}^m \lambda_i v_i(x_i, s_i)-\phi\left(\sum_{i=1}^m \lambda_i y_i, \sum_{i=1}^m\lambda_i s_i\right)
\end{equation}
also attains a maximum over $\prod_{i=1}^m \ol{Q}_i$ at $(x_1, \tau_1, \ldots, x_m, \tau_m)$.

We next apply the Crandall-Ishii lemma \cite{CIL}: 
For any $\vep>0$, there exist $(\eta_i, \xi_i, A_i)\in \overline{P}^{2, +} v_i(x_i, \tau_i)$ $(i=1, 2, \ldots, m)$ such that 
\begin{equation}\label{time jet}
\eta_i=\partial_t \phi(\hat{x}, \hat{\tau}),
\end{equation}
\begin{equation}\label{gradient jet}
\xi_i=\nabla \phi(\hat{x}, \hat{\tau}),
\end{equation}
\begin{equation}\label{hessian jet}
\begin{pmatrix}
\lambda_1 A_1 & & & \\
& \lambda_2 A_2 & & \\
& & \ddots & \\
& & & \lambda_{m} A_{m}
\end{pmatrix}
\leq Z+\vep Z^2,
\end{equation}
where $Z$ is given by 
\begin{equation}
Z=\begin{pmatrix}
\lambda_1^2 B & \lambda_1\lambda_2 B & \cdots & \lambda_1\lambda_{m} B\\
\lambda_2 \lambda_1 B & \lambda_2^2 B & \cdots & \lambda_2 \lambda_{m} B\\
\vdots & \vdots & \ddots & \vdots\\
\lambda_{m}\lambda_1 B & \lambda_{m-1} \lambda_2 B &  \cdots & \lambda_{m}^2 B
\end{pmatrix}
\end{equation}
and $B=\nabla^2 \phi(\hat{x}, \hat{t})$. It follows that there exists $C>0$ depending on $\|B\|$ and $\lambda$ such that 
\begin{equation}\label{hessian jet2}
\begin{pmatrix}
\lambda_1 \tilde{A}_1 & & & \\
& \lambda_2 \tilde{A}_2 & & \\
& & \ddots & \\
& & & \lambda_{m} \tilde{A}_{m}
\end{pmatrix}
\leq Z,
\end{equation}
where $\tilde{A}_i=A_i-C\vep I$ for $i=1, 2, \ldots, n+2$.

Adopting the time monotonicity together with \eqref{time jet}, we have 
\begin{equation}\label{time jet2}
\eta_i=\partial_t \phi(\hat{x}, \hat{\tau})\geq 0 \quad \text{for all $i=1, 2, \ldots, m$.}
\end{equation}

Let us consider two different cases. 

\noindent \textbf{Case 1.} Suppose that $\nabla \phi(\hat{x}, \hat{\tau})\neq 0$. 
Then, applying the definition of subsolutions of \eqref{general eq-k}, we have 
\begin{equation}\label{sub-inequality1 rig}
v_i(x_i, \tau_i)^{{1\over p}-1+k}\tau_i^{1-{1/\alpha}}\eta_i+{p\over \alpha}G_{i, k}^{p,\alpha}\left(x_i, \tau_i, v_i(x_i, \tau_i), \xi_i,  A_i\right)\leq 0.
\end{equation}
Multiplying \eqref{sub-inequality1 rig} by $\lambda_i$ and summing up the inequalities, we are led to
\[
\sum_i \lambda_i v_i(x_i, \tau_i)^{{1\over p}-1+k}\tau_i^{1-{1\over \alpha}}\eta_i+{p\over \alpha}\sum_i \lambda_i G_{i, k}^{p,\alpha}\left(x_i, \tau_i, v_i(x_i, \tau_i), \xi_i,  A_i\right)\leq 0,
\]
which by \eqref{time jet} yields that 
\begin{equation}\label{sub-inequality2 rig}
\left(\sum_i \lambda_i v_i(x_i, \tau_i)^{{1\over p}-1+k} \tau_i^{1-{1\over \alpha}}\right) \partial_t \phi\left(\hat{x}, \hat{\tau}\right)+{p\over \alpha}\sum_i \lambda_i G_{i, k}^{p,\alpha}\left(x_i, \tau_i, v_i(x_i, \tau_i),  \xi_i, A_i\right)\leq 0.
\end{equation}
By $\text{(H1)}$ we can easily verify that 
the function $(r, t)\mapsto r^{{1\over p}-1-k} t^{1-{1\over \alpha}}$ 
is convex in $(0, \infty)^2$, which implies that 
\begin{equation}\label{time convexity rig}
\begin{aligned}
\sum_i \lambda_i v_i(x_i, \tau_i)^{{1\over p}-1+k} \tau_i^{1-{1\over \alpha}}&\geq \left(\sum_i \lambda_i v(x_i, \tau_i)\right)^{{1\over p}-1+k}\left(\sum_i \lambda_i \tau_i\right)^{1-{1\over \alpha}}\\
&=V_{p,\lambda}^{{1\over p}-1+k}(\hat{x}, \hat{\tau})\hat{\tau}^{1-{1\over \alpha}}.
\end{aligned}
\end{equation}
The last equality is due to \eqref{convex comb} and \eqref{convex comb0}.  
Using \eqref{time jet2}, \eqref{sub-inequality2 rig}, and \eqref{time convexity rig}, we thus obtain that
\begin{equation}\label{sub-inequality3 rig}
V_{p,\lambda}(\hat{x}, \hat{\tau})^{{1\over p}-1+k} \hat{\tau}^{1-{1\over \alpha}}
 \partial_t \phi\left(\hat{x}, \hat{\tau}\right)+{p\over \alpha}\sum_i \lambda_i G_{i, k}^{p,\alpha}\left(x_i, \tau_i, v_i(x_i, \tau_i),  \xi_i, A_i\right)\leq 0.
\end{equation}
We next apply $\text{(H2)}$ with $X_i=\tilde{A}_i=A_i-C\vep I$ and $Y=B$ to deduce that 
\[
\sum_i \lambda_i G_{i, k}^{p,\alpha}\left(x_i, \tau_i, v_i(x_i, \tau_i), \xi_i,  A_i-C\vep I\right)\geq G_{\lambda,k}^{p,\alpha}\left(\hat{x}, \hat{\tau}, V_{p,\lambda}(\hat{x}, \hat{\tau}), \xi_i, B\right).
\]
It follows from the continuity of $F_i$ (and therefore of $G_{i, k}^{p,\alpha}$) in $\ol{Q}_i\times (0, \infty)\times (\R^n\setminus \{0\})\times \S^n$ that 
\begin{equation}\label{sub-inequality4 rig}
\sum_i \lambda_i G_{i, k}^{p,\alpha}\left(x_i, \tau_i, v(x_i, \tau_i), \xi_i,  A_i\right) \geq G_{\lambda,k}^{p,\alpha}\left(\hat{x}, \hat{\tau}, V_{p,\lambda}(\hat{x}, \hat{\tau}), \xi_i, B\right)-\omega_F(\vep),
\end{equation}
where $\omega_F$ denotes a modulus of continuity describing the locally uniform continuity of $F_i$. 

Plugging \eqref{sub-inequality4 rig} into \eqref{sub-inequality3 rig}, we get 
\[
V_{p,\lambda}(\hat{x}, \hat{\tau})^{{1\over p}-1+k}\hat{\tau}^{1-{1\over \alpha}} \partial_t \phi\left(\hat{x}, \hat{\tau}\right)+{p\over \alpha}G_{\lambda,k}^{p,\alpha}\left(\hat{x}, \hat{\tau}, V_{p,\lambda}(\hat{x}, \hat{\tau}), \xi_i, B\right)\leq {p\over \alpha}\omega_F(\vep),
\]
which yields, by letting $\vep\to 0$, that
\[
V_{p,\lambda}(\hat{x}, \hat{\tau})^{{1\over p}-1+k} \hat{\tau}^{1-{1\over \alpha}} \partial_t \phi\left(\hat{x}, \hat{\tau}\right)+{p\over \alpha}G_{\lambda,k}^{p,\alpha}\left(\hat{x}, \hat{\tau}, V_{p,\lambda}(\hat{x}, \hat{\tau}), \nabla \phi(\hat{x}, \hat{\tau}), \nabla^2 \phi(\hat{x}, \hat{\tau})\right)\leq 0.
\]

\noindent \textbf{Case 2.} Suppose that $\nabla \phi(\hat{x}, \hat{t})=0$. 
We are able to apply Definition \ref{def2} for $V_{p,\lambda}$ by assuming $\nabla^2\phi(\hat{x}, \hat{t})=0$, which by \eqref{hessian jet} further yields that  $A_i\leq 0$ for all $i=1, 2, \ldots, m$. Using Definition \ref{def2} for $v_i$ and the ellipticity of $G_{i, k}^{p,\alpha}$ with $i=1, 2, \ldots, m$ along with \eqref{singularity limit1} and \eqref{operator-k0}, we then have 
\[
v_i(x_i, \tau_i)^{{1\over p}-1+k}\tau_i^{1-{1/\alpha}}\eta_i+{p\over \alpha} \tilde{h}_i\left(x_i, \tau_i, v_i(x_i, \tau_i)\right)\leq 0.
\]
Multiplying this inequality by $\lambda_i$ and summing up over $i=1, 2, \ldots, n+2$, we deduce that 
\[
\left(\sum_i \lambda_i v_i(x_i, \tau_i)^{{1\over p}-1+k} \tau_i^{1-{1\over \alpha}}\right) \partial_t \phi\left(\hat{x}, \hat{\tau}\right)+{p\over \alpha} \sum_i \lambda_i \tilde{h}_i\left(x_i, \tau_i, v(x_i, \tau_i)\right)\leq 0.
\]
Thanks to \eqref{time convexity rig} again and \eqref{singularity convexity}, we may use \eqref{convex comb} and \eqref{convex comb0} to conclude that
\[
V_{p,\lambda}(\hat{x}, \hat{\tau})^{{1\over p}-1+k} \hat{\tau}^{1-{1\over \alpha}} \partial_t \phi\left(\hat{x}, \hat{\tau}\right)+{p\over \alpha} \tilde{h}_0(\hat{x}, \hat{\tau}, V_{p,\lambda}(\hat{x}, \hat{\tau}))\leq 0.
\]
The proof of the case $p>0$  is now complete. As we mentioned at the beginning, the proof for the cases $p=0$ and $p<0$ can be done similarly and we leave the details to the reader. In the latter case, several inequalities need to be changed; for example, \eqref{time jet2} should be reverted and \eqref{hessian jet2} will become 
\[
\begin{pmatrix}
\lambda_1 \tilde{A}_1 & & & \\
& \lambda_2 \tilde{A}_2 & & \\
& & \ddots & \\
& & & \lambda_{m} \tilde{A}_{m}
\end{pmatrix}
\geq Z,
\]
where $\tilde{A}_i=A_i+C\vep I$ for $i=1, 2, \ldots, n+2$  this time. 
\end{proof}



\section{Applications}\label{sec:ex}

Let us discuss applications of Theorem \ref{thm main}, Theorem \ref{thm main1}, and Corollary \ref{cor main2} in this section. We will mainly verify $\text{(H1)}$ and $\text{(H2)}$ in Theorem \ref{thm main} for various concrete examples of $F_i$.  Most of our examples below satisfy the assumptions along with the conditions $1/2\leq \alpha\leq 1$ and $p<1$. 

\subsection{The Laplacian}\label{sec:ex1}
We are able to use Theorem \ref{thm main} and Theorem \ref{thm main1} to recover the main results in \cite{IshSa2, IshSa3}.  Let us first consider Theorem \ref{thm main} when $p\neq 0$ and
\[
F_i(x, t, r, \xi, X)=-\tr X-f_i(x, t, r, \xi),
\]
where we assume that $f_i\geq 0$ and \eqref{nonlinearity cond0} holds for $g_i$ given in \eqref{nonlinearity tran}. 

Taking $k=3-1/p$,  we see that $\text{(H1)}$ holds for any $1/2\leq \alpha\leq 1$ and $0\neq p<1$. 
We can verify $\text{(H2)}$ in this case with $k=3-1/p$. Since $G_{i, k}^{p,\alpha}$ is given by \eqref{heat op tran} and \eqref{nonlinearity cond0} holds, it suffices to show that 
\begin{equation}\label{heat cond}
\sum_i \lambda_i H(r_i, X_i)\geq  H\left(\sum_i \lambda_i r_i, Y\right)
\end{equation}
for $(x_i, t_i, r_i, X_i)\in  Q_i\times (0, \infty)\times \S^n$ ($i=1, 2, \ldots, m$) satisfying \eqref{vertex} and \eqref{key2}, where
\[
H(r, X)= -{1\over p}{r^2}\tr X,  \quad {(r, X)\in (0, \infty)\times \S^n}.
\]

In fact, multiplying both sides of \eqref{key2} by $(r_1\eta, \ldots, r_{m}\eta)\in \R^{mn}$ for an arbitrary $\eta\in \R^n$  from left and right, we have 
\[
\sgn(p)\sum_i \lambda r_i^2 \la X_i \eta, \eta\ra\leq \sgn(p)\left(\sum_i \lambda_i r_i\right)^2 \la Y\eta, \eta\ra;
\]
in other words, 
\[
\sgn(p)\sum_i \lambda r_i^2  X_i\leq  \sgn(p)\left(\sum_i \lambda_i r_i\right)^2 Y. 
\]
Here $\sgn(p)$ denotes the sign of $p\in \R$.  This immediately implies that 
\[
-{1\over p} \sum_i \lambda r_i^2  \tr X_i\geq  -{1\over p} \left(\sum_i \lambda_i r_i\right)^2 \tr Y, 
\]
which is equivalent to \eqref{heat cond}.

We  can further use Corollary \ref{cor main2} to obtain the parabolic power concavity of the solution. 
Since the operator $\ol{G}$ defined by \eqref{gbar} in this case is
\[
\ol{G}(x, t, r, \xi, X)=-{r^2\over p}\tr X- {(1-p)\over p^2 } r|\xi|^2-r^{3-{1\over p}} f\left(x, t^{1\over \alpha}, r^{1\over p}, {1\over p}r^{{1\over p}-1} \xi\right), 
\]
the assumption (H2b) in Corollary \ref{cor main2} requires concavity of \eqref{concave nonlinearity} in $Q_\lambda\times (0, \infty)$. 

We remark that, although the case of the Laplacian has been of course largely and deeply investigated, negative power concavity has never been considered before, to our knowledge. 

We can treat the case $p=0$ in an analogous way. 
When we apply Theorem \ref{thm main} in this case, since \eqref{par cond} in $\text{(H1)}$ is not required, we can choose $k\in \R$ according to the given nonlinear terms $f_i$ in order to guarantee $\text{(H2)}$. 
We may take $k=1$ provided that \eqref{nonlinearity cond0} holds with $g_i$ given by \eqref{nonlinearity tran0}. With such a choice, we can follow the argument in the case $p>0$ to verify \eqref{heat cond} under \eqref{vertex} and \eqref{key2}, where this time we take 
\[
H(r, X)= -e^{2r}\tr X\quad\text{for}\quad {(r, X)\in (0, \infty)\times \S^n}.
\]

\subsection{The normalized $q$-Laplacian}\label{sec:ex2}
We can apply our results to the normalized $q$-Laplacian operator with $1< q<\infty$. Suppose that $F_i$ is given by
\begin{equation}\label{general p-lap}
F_i(x, t, r, \xi, X)=-\tr\left[\left(I+(q-2){\xi\otimes \xi\over |\xi|^2}\right)X\right]-f_i(x, t, r, \xi), 
\end{equation}
where $1< q< \infty$ and $f_i\geq 0$ $(i=\lambda, 1, \ldots, m)$. 
We take $k=3-1/p$ and assume that \eqref{nonlinearity cond0} holds for $g_i$ in \eqref{nonlinearity tran}. 
Suppose that $1/2\leq \alpha\leq 1$ and $0\neq p<1$.
Let us verify the assumption $\text{(H2)}$ with $p\neq 0$ again in this case. 

Similar to the case $q=2$ in Section \ref{sec:ex1}, the key is to prove that for any fixed $\xi\in \R^n\setminus\{0\}$, 
\begin{equation}\label{p-laplacian cond}
\sum_i \lambda_i H_q(r_i, \xi, X_i)\geq  H_q\left(\sum_i \lambda_i r_i, \xi,  Y\right)
\end{equation}
holds for any $(x_i, t_i, r_i, X_i)\in  Q_i\times (0, \infty)\times \S^n$ ($i=1, 2, \ldots, m$) satisfying \eqref{vertex} and \eqref{key2}, where
\[
H_q(r, \xi, X)=-{1\over p}{r^2}\tr\left[\left(I+(q-2){\xi\otimes \xi\over |\xi|^2}\right)X\right]
\]
for $(r, \xi, X)\in (0, \infty)\times (\R^n\setminus \{0\})\times\S^n$. 
To see this, we first notice that
\[
M(\xi):=I+(q-2){\xi\otimes \xi\over |\xi|^2}
\]
is a positive semi-definite matrix in $\S^n$. We thus can write 
\[
H_q(r, \xi, X)=-{1\over p}{r^2}\tr \left(M^{1\over 2}(\xi) X M^{1\over 2}(\xi) \right),
\]
where $M^{1/2}$ is the (nonnegative) square root of $M$.
If \eqref{key2} holds, then by multiplying \eqref{key2} by $\left(r_1 M^{1/2}(\xi)\eta, r_2 M^{1/2}(\xi)\eta, \ldots, r_m M^{1/2}(\xi)\eta\right)\in \R^{mn}$ from both sides for any $\eta\in \R^n$ we can obtain
\[
\sgn(p)\sum_i \lambda_i r_i^2 \tr (M(\xi)X_i)\leq \sgn(p)\left(\sum_i \lambda_i r_i\right)^2 \tr(M(\xi)Y),
\]
which immediately yields the desired property \eqref{p-laplacian cond} for $H_q$.

In this case we also have the parabolic power concavity result in Corollary \ref{cor main2} provided that  \eqref{concave nonlinearity} is concave in $Q_\lambda\times (0, \infty)$ for any $\xi\in \R^n$. The operator $\ol{G}$ in \eqref{gbar} is now given by 
\begin{equation}\label{general p-lap2}
\begin{aligned}
\ol{G}(x, t, r, \xi, X)=-{r^2\over p}\tr\left[\left(I+(q-2){\xi\otimes \xi\over |\xi|^2}\right)X\right] & +{r\over p^2}(1-p)(1-q)|\xi|^2 \\
&-r^{3-{1\over p}} f\left(x, t^{1\over \alpha}, r^{1\over p}, {1\over p}r^{{1\over p}-1}\xi\right). 
\end{aligned}
\end{equation}


  To show that ${\ol{G}}$ verifies (H2b), we again need to assume the concavity of \eqref{concave nonlinearity} in $Q_\lambda\times (0, \infty)$ for any $\xi\in \R^n$.  

We omit the discussion for the case $p=0$, since it can be handled analogously under appropriate assumptions on $f_i$. 

\subsection{General quasilinear operators}\label{sec:ex3} We can further extend the situation in Section \ref{sec:ex2} to more general quasilinear operators in the form of
\[
F_i(x, t, r, \xi, X)=-\tr (A_i(x, \xi) X) -f_i(x, t, r, \xi),
\] 
where $f_i\geq 0$ and $A_i(x, \xi)$ a given nonnegative matrix for any $x\in \Oba_i$ and $\xi\in \R^n\setminus \{0\}$ for all $i=\lambda, 1, \ldots, m$. Let $1/2\leq \alpha\leq 1$ and $p<1$. We assume that $A_i(x, \xi)$ is uniformly continuous and bounded in $\Oba_i\times(\R^n\setminus\{0\})$
for all $i=1, 2, \ldots, m$.

Let us again only consider the case $p\neq 0$. Besides the condition \eqref{nonlinearity cond0} with $g_i$ in \eqref{nonlinearity tran}, the assumption $\text{(H2)}$ with $k=3-1/p$ requires that 
\begin{equation}\label{quasi1}
\sum_{i=1}^m \lambda_i H_{A_i}(x_i, r_i, \xi, X_i)\geq H_{A_\lambda} \left(\sum_{i=1}^m \lambda_i x_i, \sum_{i=1}^m \lambda_i r_i, \xi, Y\right)
\end{equation}
for $(x_i, t_i, r_i, X_i)\in  Q_i\times (0, \infty)\times \S^n$ ($i=1, 2, \ldots, m$) satisfying \eqref{vertex} and \eqref{key2}, where 
\begin{equation}\label{quasi2}
H_{A_i}(x, r, \xi, X)=-{1\over p}r^2 \tr(A_i(x, \xi)X) -{1-p\over p^2} r\la A_i(x, \xi)\xi, \xi\ra
\end{equation}
for $i=\lambda, 1, \ldots, m$. This can be verified easily as in Section \ref{sec:ex2} when all $A_i$ coincide and do not depend on the variable $x$. 

As for the application of Corollary \ref{cor main2}, we see that $\ol{G}$ in this case is given by
\[
\ol{G}(x, t, r, \xi, X)= -r^2 \tr(A(x, \xi)X) -{1-p\over p^2} r\la A(x, \xi)\xi, \xi\ra-g(x, t, r, \xi),
\]
where $g$ is as in \eqref{concave nonlinearity}.

Since the first term on the right hand side can be handled analogously as in Section \ref{sec:ex2}, we omit the details. Hence, a sufficient condition to guarantee the assumption (H2b) is the concavity of  
\[
(x, t, r)\mapsto {1-p\over p^2} r \la A(x, \xi)\xi, \xi\ra+r^{3-{1\over p}} f\left(x, t^{1\over \alpha}, r^{1\over p}, {1\over p}r^{{1\over p}-1} \xi\right)
\]
 in $Q_\lambda\times (0, \infty)$ for any fixed $\xi\neq 0$. In particular, if the coefficient matrix $A$ does not depend on $x$, i.e., $A=A(\xi)$, then we require 
\[
(x, t, r)\mapsto  r^{3-{1\over p}} f\left(x, t^{1\over \alpha}, r^{1\over p}, {1\over p}r^{{1\over p}-1} \xi\right)
\]
is concave for any $\xi\neq 0$, as needed in the previous examples.

We remark that in addition to the normalized $q$-Laplacian discussed in Section \ref{sec:ex2}, applicable quasilinear operators also include the so-called Finsler Laplacian as a special case. Recall that the Finsler-Laplace operator is defined by 
\[
\mathcal{F} u=-\dive(J(\nabla u)\nabla J(\nabla u)),
\]
where $J: \R^n\to \R$ is a given nonnegative convex function of class $C^2(\R^n\setminus\{0\})$ which is positively homogeneous of degree 1, i.e., 
$J(k\xi)=|k| J(\xi)$ for all $k\in \R$ and $\xi\in \R^n$. 
We can write
\[
\mathcal{F} u=-\tr \left(A_J(\nabla u)\nabla^2 u\right),
\quad\mbox{where}\quad A_J(\xi)={1\over 2}\nabla^2 J^2(\xi).
\]
The homogeneity and regularity of the function $J$ imply that the coefficient matrix $A_J$ is bounded and continuous in $\R^n\setminus \{0\}$.

It is now easily seen that Theorem \ref{thm main} does apply to the equations with 
\[
F_i(x, t, r, \xi, X)=-\tr \left(A_J(\xi)X\right)-f_i(x, t, \xi),\quad \text{ $i=\lambda, 1, \ldots, m$.}
\]
Note that the boundedness and continuity of $A_J$ in $\R^n\setminus \{0\}$ enable us to apply the standard viscosity theory to equations involving $\mathcal{F}$; see basic structure assumptions (F1)--(F5) in Appendix \ref{sec:app1} for well-posedness. 

Moreover, since in this case $H_{A_i}$ in \eqref{quasi2} is given by 
\[
H_{A_i}(x, r, \xi, X)=-{1\over p} r^2 \tr(A_J(\xi)X) -{1-p\over p^2} r\la A_J(\xi)\xi, \xi\ra,
\]
for $i=\lambda, 1, \ldots, m$, we can show that \eqref{quasi1} holds for $(x_i, t_i, r_i, X_i)\in  Q_i\times (0, \infty)\times \S^n$ ($i=1, 2, \ldots, m$) satisfying \eqref{vertex} and \eqref{key2}, due to the convexity and nonnegativity of $J$. 

One can use a similar argument to justify the application of Corollary \ref{cor main2} to the Finsler Laplacian.

 
\subsection{The Pucci operator} 
A typical example of fully nonlinear operators is the Pucci operator
\[
\M^-_{a, b}(X)=\inf_{aI \leq A\leq bI} \tr(AX)=a\sum_{e_i\geq 0} ae_i+b\sum_{e_i<0} be_i,
\]
where $0<a\leq b$ are given and $e_i=e_i(X)$ denotes the eigenvalues of any $X\in \S^n$.
 
 Consider 
\begin{equation}\label{pucci1}
F_i(x, t, r, \xi, X)=-\M^-_{a, b}(X)-f_i(x, t, r, \xi)
\end{equation}
for $(x, t)\in \overline{Q}$, $r\in [0, \infty)$, $\xi\in \R^n$, and $X\in \S^n$. As in the examples in Section \ref{sec:ex1} and Section \ref{sec:ex2}, we again assume that $f_i$ is nonnegative and satisfies the relation \eqref{nonlinearity cond0} with $g_i$ defined in \eqref{nonlinearity tran}.

Assume that $1/2\leq \alpha\leq 1$, $p<1$, and $p\neq 0$ so that $\text{(H1)}$ holds with $k=3-1/p$. With such a choice of $k$, we can also verify $\text{(H2)}$. In fact, the operator $G_{i, 3-1/p}$ in this case reads 
\[
G_{i, 3-1/p}(x, t, r, \xi, X)=\sup_{aI\leq A\leq bI} H_A(r, \xi, X)-g_i(x, t, r, \xi),
\]
where 
\[
H_A(r, \xi, X)=- {r^2 \over p}  \tr (AX)-{(1-p)r\over p^2} \la A\xi, \xi\ra.
\]
As shown in Section \ref{sec:ex3}, for any fixed $A\in \S^n$ such that $aI\leq A\leq bI$ and $\lambda\in \Lambda_m$, by \eqref{nonlinearity cond0} we have
\[
\begin{aligned}
\sum_i & \left\{\lambda_i H_A(r_i, \xi, X_i)-\lambda_i g_i(x_i, t_i, r_i, \xi)\right\}\\
& \geq H_A\left(\sum_i \lambda_i r_i, \xi, Y\right)-g_0\left(\sum_i \lambda_i x_i, \sum_i \lambda_i t_i, \sum_i \lambda_i r_i, \xi\right)
\end{aligned}
\]
for any $(x_i, t_i, r_i, X_i)\in  Q_i\times (0, \infty)\times \S^n$ ($i=1, 2, \ldots, m$) satisfying \eqref{vertex}--\eqref{key2}. Maximizing both sides over $aI\leq A\leq bI$, we are led to 
\[
\begin{aligned}
\sum_i & \left\{ \lambda_i \sup_{aI\leq A\leq bI}H_A(r_i, \xi, X_i)-\lambda_i g_i(x_i, t_i, r_i, \xi)\right\}\\
& \geq \sup_{aI\leq A\leq bI} H_A\left(\sum_i \lambda_i r_i, \xi, Y\right)-g_0\left(\sum_i \lambda_i x_i, \sum_i \lambda_i t_i, \sum_i \lambda_i r_i, \xi\right),
\end{aligned}
\]
which completes the verification of $\text{(H2)}$. Similar applications can be obtained in the case $p=0$. One needs to fix $k\in \R$ in accordance with assumptions on $f_i$ ($i=\lambda, 1, 2, \ldots, m$).

We can therefore use Corollary \ref{cor main2} to give a corresponding parabolic power concavity result. Suppose that $f$ is a given nonnegative continuous function and \eqref{concave nonlinearity} is concave with respect to $(x, t, r)$.  Noticing that $\ol{G}$ in \eqref{gbar} in this case is 
\begin{equation}\label{pucci2}
\ol{G}(x, t, r, \xi, X)=\sup_{aI\leq A\leq bI} H_A(r, \xi, X)-g(x, t, r, \xi),
\end{equation}
we can show that it satisfies (H2b). 

We remark that although the result of Theorem \ref{thm main} holds for the operator in \eqref{pucci1}, 
in general, it may not apply to the other type of Pucci operator, which reads
\[
\M^+_{a, b}(X)=\sup_{aI \leq A\leq bI} \tr(AX)=a\sum_{e_i\leq 0} ae_i+b\sum_{e_i>0} be_i, \quad X\in \S^n.
\]
Note that $-\M^-_{a, b}(X)$ is convex in $X$ but $-\M^+_{a, b}(X)$ is concave.

\subsection{Porous medium equation}\label{sec:ex5}
We also show an application of our concavity result to the porous medium equation. Suppose that the equation \eqref{general eq} reduces to 
\[
\partial_t u-\Delta (u^\sigma)=f_i(x, t, u, \nabla u)\quad \text{in $\Omega\times (0, \infty)$}
\]
for a given $\sigma>1$ and $f_i\geq 0$ satisfying assumptions to be specified later.  In this case, the elliptic operator $F_i$ becomes
\[
F_i(x, t, r, \xi, X)=-\sigma r^{\sigma-1}\tr X-\sigma(\sigma-1)r^{\sigma-2}|\xi|^2-f_i(x, t, r, \xi).
\]
The situation for this operator is different from the previous applications. In the case $p\neq 0$, we are actually not able to apply Theorem \ref{thm main} with $k=3-1/p$ or obtain a corresponding concavity result through Corollary  \ref{cor main2}, since our operators hardly satisfy $\text{(H2)}$ no matter what assumption is imposed on $f_i$. Instead, we use Theorem \ref{thm main} with 
\begin{equation}\label{choice k}
k={\sigma\over p}-3
\end{equation}
so as to meet the requirement $\text{(H2)}$.  Note that 
due to the choice of $k$ as in \eqref{choice k}, we have 
\[
G_{i, k}^{p,\alpha}(x, t, r, \xi, X)=-{\sigma\over p} r^2\tr X-{\sigma(\sigma-p)\over p^2} r|\xi|^2-r^{3-{\sigma\over p}}f_i\left(x, t^{1\over \alpha}, r^{1\over p}, {1\over p}r^{{1\over p}-1} \xi\right),
\]
where the major part (the first and second terms) is convex in the sense of $\text{(H2)}$. (They are in fact the same as those major terms in the previous examples.) This is precisely the reason why we chose $k$ as in \eqref{choice k}. 

Moreover, the assumption on $f_i$ is still as in \eqref{nonlinearity cond0} but with 
$g_i$ given by 
\[
g_i(x, t, r)=r^{3-{\sigma\over p}}f_i\left(x, t^{1\over \alpha}, r^{1\over p}, {1\over p}r^{{1\over p}-1} \xi\right).
\]
In order to meet the condition $\text{(H1)}$, we need to additionally assume that 
\begin{equation}\label{porous2}
\text{either }\quad 2p-\sigma+1\leq 0\quad \text{or}\quad {p\over 2p-\sigma+1}\leq \alpha\leq 1.
\end{equation}
We again omit the details for the case $p=0$. 

For the concavity result in Theorem \ref{thm main1}, we can make the same choice of parameters $k$, $\alpha$, and $p$ as in \eqref{choice k} and \eqref{porous2}, and assume that 
\begin{equation}\label{porous1}
(x, t, r)\mapsto r^{3-{\sigma\over p}}f\left(x, t^{1\over \alpha}, r^{1\over p}, {1\over p}r^{{1\over p}-1} \xi\right)
\end{equation}
is concave for any $\xi$.
\begin{rmk}
The condition \eqref{porous2} means that, for any given $\sigma>1$, in order to obtain parabolic power concavity with $p\in (-\infty, 1)$  satisfying $\sigma \geq 2p+1$, we basically have no restrictions on $\alpha\in (0, 1]$ except for \eqref{porous1} when $p\neq 0$ (or a variant of \eqref{porous1} when $p=0$). 
On the other hand, for $p\in (-\infty, 1)$ fulfilling $\sigma<2p+1$, \eqref{porous2} requires that $p\geq \sigma-1$ so as to allow room for $\alpha$; there is no result for the range $(\sigma-1)/2< p<\sigma-1$ no matter what $\alpha$ is taken. 
\end{rmk}

Note that when $\sigma=1$ the conditions \eqref{porous1} and \eqref{porous2} reduce to the assumptions for the Laplacian case as discussed in Section \ref{sec:ex1}. 

\appendix

\section{Related issues on viscosity solutions}\label{sec:app}
Let us discuss several important properties of viscosity solutions, which are used in the previous sections. We first review well-posedness for general fully nonlinear parabolic equations and then give sufficient conditions for the time monotonicity of solutions and a Hopf-type property. 


\subsection{Well-posedness}\label{sec:app1}

 Let $\Omega$ be a bounded smooth domain.  We below consider the equation~\eqref{single eq} in $Q=\Omega\times (0, \infty)$ with a homogeneous initial boundary condition, i.e., 
\begin{numcases}{}
\partial_t u+F(x, t, u, \nabla u, \nabla^2 u)=0 &\text{in $Q$, }\label{eq app}\\
u=0 &\text{on $\partial Q$.}\label{bdry app}
\end{numcases}
We denote by $\nu$ the inward unit normal vector to $\pO$. Set $\tilde{\nu}(x)=\nu(x)$ if $x\in \pO$ and $\tilde{\nu}(x)=0$ if $x\in \Omega$.

Let us impose the following basic structure assumptions on $F$:
\begin{enumerate}
\item[(F1)] $F: \Oba\times [0, \infty)\times \R\times (\R^n\setminus \{0\})\times \S^n\to \R$ is continuous. 
\item[(F2)] $F$ is degenerate elliptic, i.e., \eqref{eq elliptic} holds 
for all $x\in \Oba_i$, $t\in [0, \infty)$, $r\in [0, \infty)$, $\xi\in \R^n\setminus\{0\}$, and $X_1, X_2\in \S^n$ satisfying $X_1\geq X_2$.
\item[(F3)] $F$ is monotone in the sense that there exists $c\in \R$ such that \eqref{eq monotone} holds
for all $(x, t, \xi, X)\in \Oba_i\times [0, \infty)\times (\R^n\setminus\{0\})\times \S^n$ and $r_1, r_2\in [0, \infty)$ satisfying $r_1\leq r_2$.
\item[(F4)] For any $R>0$, there exists a modulus of continuity $\omega_R$ such that 
\[
|F(x, t, r, \xi, X)-F(y, t, r, \xi, X)|\leq \omega_R(|x-y||\xi|+1)
\]
for all $x, y\in \Oba$, $t\in [0, \infty)$, $|r|\leq R$, $\xi\in \R^n\setminus \{0\}$, and $X\in \S^n$.
\item[(F5)] There exists a continuous function $h: \ol{Q}\times [0, \infty)\to \R$ such that
\begin{equation}\label{app f0}
h(x, t, r)=(F)_\ast(x, t, r, 0, 0)=(F)^\ast(x, t, r, 0, 0)\quad \text{for $(x, t, r)\in Q\times (0, \infty)$}.
\end{equation}
\end{enumerate}

Under these assumptions, viscosity solutions (sub- and supersolutions) of \eqref{eq app} are defined as in Section \ref{sec:viscosity}. It is known that the following comparison theorem holds. 

\begin{thm}[Theorem 3.6.1 in \cite{Gbook}]\label{thm comparison}
Assume that $\Omega$ is bounded and {\rm(F1)}--{\rm(F5)} hold. Let $u$ and $v$ be respectively an upper semicontinuous subsolution and a lower semicontinuous supersolution of \eqref{eq app}. If $u\leq v$ on $\partial Q$, then $u\leq v$ in $\ol{Q}$.
\end{thm}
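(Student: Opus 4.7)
The plan is to use the standard doubling-of-variables method for parabolic viscosity solutions, adapted to the mild singularity at $\xi = 0$ encoded by (F5). First, I would argue by contradiction, assuming $M := \sup_{\overline Q}(u - v) > 0$. Using (F3) with constant $c$, the substitution $\tilde u(x,t) = e^{-\gamma t} u(x,t)$ and $\tilde v(x,t) = e^{-\gamma t} v(x,t)$ for some $\gamma > c$ reduces to the case where $F$ is \emph{strictly} increasing in $r$, i.e.\ $F(x,t,r_2,\xi,X) - F(x,t,r_1,\xi,X) \ge \gamma(r_2 - r_1)$ for $r_1 \le r_2$. To localize in time, I would fix $T>0$ with $\sup_{\Omega \times (0,T)}(u-v) > M/2$ and subtract the Barles--Perthame penalty $\eta/(T-t)$, which adds $\eta/(T-t)^2$ to the time derivative at any interior maximum while not affecting $u \le v$ on $\partial Q$.

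Second, I would double the variables on $\overline{\Omega \times (0,T)}^2$ via
\[
\Phi_\alpha(x,t,y,s) := u(x,t) - v(y,s) - \tfrac{\alpha}{2}|x-y|^2 - \tfrac{\alpha}{2}(t-s)^2 - \tfrac{\eta}{T-t},
\]
and pick a maximizer $(x_\alpha, t_\alpha, y_\alpha, s_\alpha)$. Standard penalization estimates yield $\alpha\bigl(|x_\alpha - y_\alpha|^2 + (t_\alpha - s_\alpha)^2\bigr) \to 0$ as $\alpha \to \infty$, both points converging to a common limit $(x_*, t_*) \in \overline Q$ with $(u-v)(x_*,t_*) > 0$. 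The boundary inequality together with the $\eta/(T-t)$ penalty push $(x_*, t_*)$ into the parabolic interior for $\alpha$ sufficiently large.

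Third, I would invoke the parabolic Crandall--Ishii lemma to obtain $(\tau_1, p_\alpha, X_\alpha) \in \overline P^{2,+} u(x_\alpha, t_\alpha)$ and $(\tau_2, p_\alpha, Y_\alpha) \in \overline P^{2,-} v(y_\alpha, s_\alpha)$ with the common gradient $p_\alpha = \alpha(x_\alpha - y_\alpha)$, the gap $\tau_1 - \tau_2 = \eta/(T-t_\alpha)^2 > 0$, and the matrix inequality
\[
\begin{pmatrix} X_\alpha & 0 \\ 0 & -Y_\alpha \end{pmatrix} \le 3\alpha \begin{pmatrix} I & -I \\ -I & I \end{pmatrix},
\]
so in particular $X_\alpha \le Y_\alpha$. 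In the generic case $p_\alpha \ne 0$, the sub/supersolution inequalities combine with (F2), the strict monotonicity (giving a positive contribution $\gamma(u-v)(x_*,t_*)$ in the limit), and the modulus (F4) applied with $|p_\alpha|\,|x_\alpha - y_\alpha| = \alpha|x_\alpha-y_\alpha|^2 \to 0$ to produce a contradiction upon sending $\alpha \to \infty$ and then $\eta \to 0$.

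The main obstacle is the degenerate case $p_\alpha = 0$, where $F$ may be singular and (F4) does not directly control the $x$-variation. Here (F5) is essential: one uses the $\mathcal{F}$-solution reformulation (Definition~\ref{def2}) so that, when $p_\alpha = 0$, one tests against constants and the inequalities reduce to $\tau_i + h(\cdot, \cdot, \cdot) \lessgtr 0$ involving the continuous function $h$, whereupon the contradiction is again delivered by the strict monotonicity term. Making this rigorous requires the standard Ohnuma--Sato technique of cutting off test functions depending on whether the vanishing gradient is accompanied by a vanishing Hessian, and carefully separating the two regimes when taking the double-variable maximum; this is precisely the content of \cite[Chapter~3]{Gbook} that I would invoke.
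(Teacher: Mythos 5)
There is no proof in the paper to compare against: Theorem \ref{thm comparison} is quoted verbatim from \cite[Theorem 3.6.1]{Gbook} and the paper offers only the citation. Your sketch follows the standard comparison machinery, and the nonsingular branch (doubling, parabolic Crandall--Ishii lemma, ellipticity (F2), strict properness after the $e^{-\gamma t}$ rescaling, and (F4) with $\alpha|x_\alpha-y_\alpha|^2\to 0$) is the right argument and would go through.

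The genuine gap is in the singular branch, and it is exactly where the theorem is nontrivial. With the quadratic penalization $\tfrac{\alpha}{2}|x-y|^2$, if the maximizer has $x_\alpha=y_\alpha$ (so $p_\alpha=0$), the matrices produced by the theorem of sums are of size $O(\alpha)$ and do \emph{not} vanish; but Definition \ref{def2} and Remark \ref{rmk def2} only let you replace $F$ by the continuous function $h$ of (F5) when the gradient \emph{and} the Hessian entry of the jet vanish simultaneously. So your claim that ``when $p_\alpha=0$ one tests against constants and the inequalities reduce to $\tau_i+h\lessgtr 0$'' is not available in the setup you wrote down, and (F4) gives no help since it never compares different $X$'s. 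The necessary device is to replace $|x-y|^2$ by an admissible penalization such as $|x-y|^4$ (more generally $f(|x-y|)$ with $f'(r)/r\to 0$, $f''(r)\to 0$ as $r\to 0$), so that a vanishing gradient forces the penalization's Hessian to vanish as well, together with the corresponding version of the theorem of sums for such test functions; this is the Ishii--Souganidis/Ohnuma--Sato construction carried out in \cite[Chapter 3]{Gbook}. Deferring precisely that step to \cite{Gbook} makes your argument circular in the present context, since the statement you are asked to prove \emph{is} \cite[Theorem 3.6.1]{Gbook}; to count as an independent proof you would need to carry out the admissible-penalization argument (or an equivalent regularization of the singularity at $\xi=0$) explicitly rather than cite it.
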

Uniqueness of viscosity solutions to \eqref{eq app}--\eqref{bdry app} is an immediate consequence of Theorem~\ref{thm comparison}. One can obtain existence of a  positive viscosity solution by adopting Perron's method for viscosity solutions if a positive subsolution exists; see precise arguments in \cite{I1} and \cite[Section 4]{CIL} for nonsingular equations and \cite[Section 2.4]{Gbook} for singular one. 

Moreover, a standard argument (\cite[Theorem 2.2.1]{Gbook} for instance) yields that the unique solution $u$ is stable in the sup norm under uniform perturbation of the operator and initial boundary data.

As for the spatial Lipschitz regularity, which is needed in Theorem \ref{thm main}, we refer to relevant results in the literature. Lipschitz or H\"{o}lder regularity of viscosity solutions to fully nonlinear nonsingular parabolic equations is given in 
\cite{Ba0, Wang1, Wang2, KK2, BaSo, Ba3} etc. 
We also consult local Lipschitz estimates for  singular parabolic equations such as the normalized $q$-Laplace equations in \cite{Doe, PRu, JiSi} ($1<q<\infty$) and in \cite{JuKa} ($q=\infty$).

\subsection{Monotonicity in time}\label{sec:app2}
The next two subsections are devoted to discussion on the assumptions (i) and (ii) in Theorem \ref{thm main} (and in Theorem \ref{thm main1}) for $p\in(0,1)$.  Since it is in general quite restrictive to assume (i) and (ii) on $F_i$, 
we consider the approximate equation \eqref{perturb eq}. 
We will actually provide sufficient conditions to guarantee (i) and (ii) for \eqref{perturb eq} instead of \eqref{general eq}.  

Let us first study the time monotonicity in (i). Suppose that
\begin{equation}\label{time monotonicity1}
h(x, 0, 0)=F_\ast(x, 0, 0, 0, 0)=F^\ast(x, 0, 0, 0, 0)\leq 0,
\qquad\qquad\qquad\quad
\end{equation}
\begin{equation}\label{time monotonicity2}
\begin{split}
 & F(x, t, r, p, X)\leq F(x, s, r, p, X)\\
 & \text{for any $t\geq s\geq 0$ and $(x, r, p, X)\in \Oba\times [0, \infty)\times (\R^n\setminus \{0\})\times \S^n$.}
\end{split}
\end{equation}
\begin{lem}[Monotonicity in time]\label{lem time monotone}
Assume that  $\Omega$ is bounded and $F$ satisfies {\rm(F1)}--{\rm(F5)}. If \eqref{time monotonicity1} and \eqref{time monotonicity2} hold,  then the unique solution $u$ of  \eqref{general eq}--\eqref{bdry} satisfies \eqref{time monotone2}. 
\end{lem}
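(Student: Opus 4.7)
\smallskip

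\textbf{Proof plan.} The strategy is a standard time-shift argument coupled with the comparison principle of Theorem~\ref{thm comparison}. Fix $h>0$ and set $w(x,t):=u(x,t+h)$ on $\overline{Q}$. The goal is to show $w\geq u$ in $\overline{Q}$, which is exactly \eqref{time monotone2}. There are two things to verify: that $w$ is a supersolution of \eqref{eq app} and that $w\geq u$ on $\partial Q$.

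\emph{Step 1: nonnegativity of $u$.} I first show $u\geq 0$ in $\overline{Q}$ by comparing with the constant $0$. For this I need $0$ to be a subsolution of \eqref{eq app} in the sense of Definition~\ref{def2}, which reduces to checking $h(x,t,0)\leq 0$ for every $(x,t)\in\overline{Q}$. From \eqref{time monotonicity2}, by letting $(\xi,X)\to(0,0)$ and using (F5) to identify the envelopes with $h$, one obtains the monotonicity $h(x,t,0)\leq h(x,s,0)$ for $t\geq s$. Combining with \eqref{time monotonicity1}, which gives $h(x,0,0)\leq 0$, yields $h(x,t,0)\leq 0$ for all $t\geq 0$. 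Since $u=0$ on $\partial Q$, Theorem~\ref{thm comparison} gives $u\geq 0$ in $\overline{Q}$.

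\emph{Step 2: $w$ is a supersolution of \eqref{eq app}.} A direct consequence of the definition of viscosity solutions is that $w$ solves $\partial_t w+F(x,t+h,w,\nabla w,\nabla^2 w)=0$ in $Q$. I then upgrade this to the original equation via \eqref{time monotonicity2}: if $\varphi\in C^2$ is a test function with $w-\varphi$ attaining a local minimum at $(x_0,t_0)\in Q$ and $\nabla\varphi(x_0,t_0)\neq 0$, then
\[
\partial_t\varphi(x_0,t_0)+F(x_0,t_0,w(x_0,t_0),\nabla\varphi,\nabla^2\varphi)\geq\partial_t\varphi(x_0,t_0)+F(x_0,t_0+h,w(x_0,t_0),\nabla\varphi,\nabla^2\varphi)\geq 0,
\]
where the first inequality uses \eqref{time monotonicity2} and the second is the supersolution property of $w$ for the shifted equation. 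The singular case $\nabla\varphi(x_0,t_0)=0$, $\nabla^2\varphi(x_0,t_0)=0$ is handled identically, replacing $F$ by $h$ and invoking the time-monotonicity of $h$ derived in Step~1.

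\emph{Step 3: boundary/initial comparison and conclusion.} On $\partial\Omega\times[0,\infty)$ one has $w(x,t)=u(x,t+h)=0=u(x,t)$, while on $\overline{\Omega}\times\{0\}$ Step~1 gives $w(x,0)=u(x,h)\geq 0=u(x,0)$. Hence $w\geq u$ on $\partial Q$. Since $u$ is a subsolution and $w$ a supersolution of \eqref{eq app}, Theorem~\ref{thm comparison} yields $w\geq u$ in $\overline{Q}$, i.e., $u(x,t+h)\geq u(x,t)$. As $h>0$ was arbitrary, \eqref{time monotone2} follows. The main technical point—rather than a genuine obstacle—is the careful treatment of the singular set $\xi=0$ when transferring the monotonicity from $F$ (on $\xi\neq 0$) to $h$, which is resolved by passing to envelopes via (F5).
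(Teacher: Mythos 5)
Your proof is correct and follows essentially the same route as the paper: establish $u\geq 0$ by comparison with the zero subsolution, show that the time-shift $w(x,t)=u(x,t+h)$ is a supersolution via \eqref{time monotonicity2}, and conclude by the comparison principle; your treatment of the singular set $\xi=0$ via (F5) just makes explicit what the paper leaves implicit. One cosmetic point: you use $h$ both for the time shift and for the function $h(x,t,r)$ from (F5), so rename the shift (the paper uses $\tau$) to avoid the clash.
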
 
\begin{proof}
The assumptions \eqref{time monotonicity1} and \eqref{time monotonicity2} imply that the constant zero is a subsolution of \eqref{general eq}--\eqref{bdry}. If follows that $u\geq 0$ in $\overline{Q}$ by the comparison principle. Fix $\tau>0$ arbitrarily and set $w_\tau(x, t)=u(x, t+\tau)$ for all $(x, t)\in \overline{Q}$. Then by \eqref{time monotonicity2} we can easily show that $w_\tau$ is a supersolution of \eqref{general eq}. Since $w_\tau(\cdot, 0)= u(\cdot, \tau)\geq u(\cdot, 0)$ in $\Oba$, we can use the comparison principle again to prove that $w_\tau\geq u$ in $\overline{Q}$, which immediately yields \eqref{time monotone2} due to the arbitrariness of $\tau$. 
\end{proof}

A more specific situation fulfilling \eqref{time monotonicity1}, \eqref{time monotonicity2}, and other assumptions needed in our main results is the case when 
\[
F(x, t, r, \xi, X)=\L(\xi, X)-f(x, t, r)
\]
for all $(x, t, r, \xi, X)\in \ol{Q}\times [0, \infty)\times (\R^n\setminus\{0\})\times \S^n$, where $f$ is nonnegative in $\ol{Q}\times [0, \infty)$ and nondecreasing in $t$, and $\L_\ast(0, 0)=\L^\ast(0, 0)=0$. Concrete examples of $\L$ include the Pucci operator, 
normalized $q$-Laplacian ($1<q\leq \infty$), and more general quasilinear operators as discussed in Section \ref{sec:ex}.

We next discuss the assumption (ii) in Theorem \ref{thm main} and Theorem \ref{thm main1}. Assume that $0<p<1$ for the rest of this section. Note that the condition \eqref{growth behavior} can be divided into two parts. One part is the following growth behavior near the initial moment:
\begin{equation}\label{rapid evolution}
{1\over \rho}u^p\left(x+\tilde{\nu}(x)\rho, \rho^{1/\alpha}\right)\to \infty\quad \text{as $\rho\to 0$ for every $x\in \Oba$.}
\end{equation}
The other part can be expressed by 
\begin{equation}\label{bdry growth}
{1\over \rho} u^p\left(x+\nu(x)\rho, t\right)\to \infty \quad \text{as $\rho\to 0$ for every $x\in \pO$ and $t>0$.}
\end{equation}
We will later see that for $p\in(0,1)$ \eqref{bdry growth} is a consequence of the Hopf lemma; consult Section~\ref{sec:app3}. 

In order to obtain \eqref{rapid evolution}, we need to strengthen the condition \eqref{time monotonicity1} in Lemma \ref{lem time monotone}. 
\begin{lem}[Rapid initial growth]\label{lem time monotone2}
Let $0<p<1$. Assume that  $\Omega$ is bounded and $F$ satisfies {\rm(F1)}--{\rm(F5)}. Assume that \eqref{time monotonicity2} holds.  
Assume that 
\begin{equation}\label{initial monotone cond}
\left\{\begin{aligned}
&\text{there exist $\beta, \beta', t_0>0$ and $\psi_0\in C^2(\Oba)$ with  $\psi_0>0$ in $\Omega$ and $\psi_0=0$ on $\partial \Omega$}\\
&\text{such that}\quad \displaystyle p\beta'+{p \beta \over \alpha}<1,\quad 
\displaystyle \sup_{\Omega}{\dist(\cdot, \pO)^{\beta'}\over \psi_0}<\infty,\quad\text{and} \\
& {\beta}\psi_0(x)t^{\beta-1}+F_\ast\left(x, t, \psi_0(x)t^{\beta}, \nabla \psi_0(x)t^{\beta}, \nabla^2 \psi_0(x)t^{\beta}\right)\leq 0
\quad\text{in $\Omega\times (0, t_0)$}.
\end{aligned}
\right.
\end{equation}
Then the unique solution $u$ of \eqref{general eq}--\eqref{bdry} satisfies \eqref{rapid evolution}.
\end{lem}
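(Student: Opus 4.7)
The plan is to construct an explicit smooth subsolution of the form $\phi(x,t):=\psi_0(x)t^\beta$ vanishing on the parabolic boundary of $\Omega\times(0,t_0)$, deduce by comparison (Theorem~\ref{thm comparison}) the pointwise bound $u\ge\phi$ on $\overline\Omega\times[0,t_0]$, and then read off \eqref{rapid evolution} directly from this lower bound, distinguishing the cases $x\in\Omega$ and $x\in\partial\Omega$.

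First I would verify that $\phi$ is a viscosity subsolution of \eqref{eq app} on $\Omega\times(0,t_0)$. Since $\phi\in C^2(\overline\Omega\times(0,t_0))$, this reduces to the pointwise inequality involving $F_*$, which is exactly the third condition in \eqref{initial monotone cond}. At interior points with $\nabla\psi_0\ne 0$ this is the classical inequality; at critical points of $\psi_0$, i.e., where $\nabla\phi=0$, the use of $F_*$ together with the mild-singularity hypothesis (F5) reduces the condition to the $h$-inequality required by Definition~\ref{def2}. Since $\psi_0\equiv 0$ on $\partial\Omega$ and $\phi(\cdot,0)\equiv 0$, we have $\phi\le u=0$ on the parabolic boundary of $\Omega\times(0,t_0)$, so Theorem~\ref{thm comparison} yields
\[
u(x,t)\ge\psi_0(x)\,t^\beta\qquad\text{in }\overline\Omega\times[0,t_0].
\]

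For the conclusion, fix $x\in\overline\Omega$ and small $\rho>0$. If $x\in\Omega$, then $\tilde\nu(x)=0$ and $\psi_0(x)>0$, so
\[
\rho^{-1}u^p(x,\rho^{1/\alpha})\ge\psi_0(x)^p\,\rho^{p\beta/\alpha-1}\to+\infty
\]
as $\rho\to 0+$, since $p\beta/\alpha<1$ follows from $p\beta'+p\beta/\alpha<1$ together with $\beta'>0$. If $x\in\partial\Omega$, the assumed bound $\psi_0\ge c\,\dist(\cdot,\partial\Omega)^{\beta'}$ combined with the geometric estimate $\dist(x+\nu(x)\rho,\partial\Omega)\ge\rho/2$, valid for small $\rho$ by the $C^2$-smoothness of $\partial\Omega$, gives $\psi_0(x+\nu(x)\rho)\ge c'\rho^{\beta'}$, hence
\[
\rho^{-1}u^p(x+\nu(x)\rho,\rho^{1/\alpha})\ge c''\rho^{p\beta'+p\beta/\alpha-1}\to+\infty,
\]
by the remaining exponent condition.

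The main obstacle is the first step: interpreting the pointwise $F_*$-inequality in the viscosity sense uniformly across critical points of $\psi_0$. All subsequent ingredients are standard---comparison on the truncated cylinder $\Omega\times(0,t_0)$, the assumed control of $\psi_0$ by $\dist(\cdot,\partial\Omega)^{\beta'}$, and elementary scaling with $\rho$.
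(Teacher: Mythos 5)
Your proposal is correct and follows essentially the same route as the paper: the same comparison function $\psi_0(x)t^\beta$, the same application of the comparison principle on $\Omega\times(0,t_0)$, and the same two-case exponent count (your explicit remarks on the viscosity interpretation at critical points of $\psi_0$ and on $\dist(x+\rho\nu(x),\partial\Omega)\gtrsim\rho$ merely make explicit what the paper leaves implicit).
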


\begin{proof}
By the last inequality in \eqref{initial monotone cond} we observe that 
the function~$(x, t)\mapsto\psi_0(x)t^{\beta}$
is a subsolution of  \eqref{general eq} restricted in $\Omega\times(0, t_0)$. Noticing that $\psi_0=0$ on $\partial \Omega$, we can use the comparison principle to obtain that $u(x, t)\geq \psi_0(x)t^{\beta}$ 
for $(x, t)\in \Omega\times (0, t_0)$. 
When $x\in \Omega$, we easily deduce \eqref{rapid evolution}, since $\psi_0>0$ in $\Omega$ and $\beta<\alpha/p$. 

If $x\in \pO$, noticing that $\psi_0(x+\rho \nu(x))\geq c\rho^{\beta'}$ in $\Omega$ for some $c>0$, we have
\[
u^p\left(x+\rho \nu(x), \rho^{1/\alpha}\right)\geq c^p\rho^{p\beta'+p\beta/\alpha},
\] 
which implies \eqref{rapid evolution}, due to the condition that $p\beta'+{p \beta /\alpha}<1$.
\end{proof}

Let us discuss how to apply Lemma \ref{lem time monotone2} 
in our applications under the assumption $\alpha\geq p$. 
Suppose that $F_i$ ($i=\lambda, 1, \ldots, m$) satisfies \eqref{time monotonicity1} and \eqref{time monotonicity2}, i.e., 
\[
h_i(x, 0, 0)=(F_i)\ast(x, 0, 0, 0, 0)=(F_i)^\ast(x, 0, 0, 0, 0)\leq 0,\qquad\qquad\,\,
\]
\begin{equation*}
\begin{split}
 & F_i(x, t, r, p, X)\leq F_i(x, s, r, p, X)\\
 & \text{for any $t\geq s\geq 0$ and $(x, r, p, X)\in \Oba_i\times [0, \infty)\times (\R^n\setminus \{0\})\times \S^n$.}
 \end{split}
 \end{equation*}
As mentioned in the beginning of this section, these assumptions in general may not guarantee \eqref{initial monotone cond} for $F=F_i$. However, we can first turn to study the perturbed equation \eqref{perturb eq} first and then let $\vep\to 0$. In addition to the perturbation for the operators,  we put $p_\vep=p-\vep$ with  $\vep>0$  small so that $\alpha>p_\vep$. We can show \eqref{initial monotone cond} holds for $p=p_\vep$  and $F=F_{i, \vep}$ for any $i=\lambda, 1, \ldots, m$. Indeed, we can choose 
\[
1<\beta<{\alpha\over p_\vep}, \quad 0<\beta'<1-{\beta p_\vep\over \alpha},
\]
and $\psi_0\in C^2(\Oba)$ such that $\psi_0=\dist(\cdot, \pO)^{\beta'}$ near $\pO$. Then we can verify the last inequality in \eqref{initial monotone cond} with $F=F_{i, \vep}$ and $p=p_{\vep}$ provided that $t_0$ is sufficiently small.


\subsection{The Hopf-type property}\label{sec:app3}
We finally discuss the property \eqref{bdry growth}, which is used to derive the condition (ii) of Theorem \ref{thm main}. It is in fact related to the so-called Hopf-type property:
\begin{enumerate}
\item[(HP)] Fix any $x_0\in \partial \Omega$ and $t_0>0$. Assume that there exist $0<\delta<t_0$ and $y_0\in \Omega$  such that \\

\begin{itemize}
\item $B_\delta(y_0)\subset \Omega$ and $\ol{B_\delta(y_0)}\cap \pO=\{x_0\}$;
\item $u$ is a supersolution of \eqref{eq app};
\item $u$ satisfies $u(x, t)>u(x_0, t_0)=0$ for any $(x, t)\in B_\delta(y_0)\times [t_0-\delta, t_0]$.
\end{itemize}
Then 
\begin{equation}\label{eq hopf}
\liminf_{\rho\to 0+}{1\over \rho}u\left(x_0+\rho {y_0-x_0\over |y_0-x_0|}, t_0\right)>0.
\end{equation}
\end{enumerate}
It is obvious that \eqref{bdry growth} is an immediate consequence of \eqref{eq hopf} when $0<p<1$. 
See \cite{DaL, Gri, CLN} for sufficient conditions on $F$ in order to obtain (HP).

For our own purpose in this work, following the same method described in Section \ref{sec:app2}, we use \eqref{eq hopf} for the approximate problem \eqref{perturb eq}, where $F_{i, \vep}$ is the perturbed operator given in \eqref{perturb F}.
It turns out that we still only need \eqref{time monotonicity1} and \eqref{time monotonicity2} on $F=F_i$ to show \eqref{eq hopf} 
for a supersolution of \eqref{perturb eq}. 

Note that \eqref{time monotonicity1} and \eqref{time monotonicity2} 
imply that 
\begin{equation}\label{tech cond}
h_i(x, t, 0)=(F_i)_\ast(x, t, 0, 0, 0)=(F_i)^\ast(x, t, 0, 0, 0)\leq 0 \quad \text{for all $(x, t)\in \ol{Q}_i$.}
\end{equation}
Let $u_{i, \vep}$ be a supersolution of \eqref{perturb eq}. 
Since the constant zero is a subsolution, 
we have $u_{i, \vep}\geq 0$ in $\Oba_i\times [0, \infty)$.   
Denote $\zeta_0=(y_0, t_0)$ and $z=(x, t)$.  
We take 
\[
v_\gamma (z)=e^{-\gamma|z-\zeta_0|^2}-e^{-\gamma \delta^2}
\]
with $\gamma>0$ large. Since 
\[
\sup_{\ol{B_\delta(y_0)}\times [t_0-\delta, t_0+\delta]}\left(|v_\gamma|+|\partial_t v_\gamma|+|\nabla v_\gamma|+|\nabla^2 v_\gamma|\right)\to 0 \qquad \text{as $\gamma\to \infty$, }
\]
it follows from \eqref{tech cond} and (F5) that, when $\gamma>0$ is large,  
\[
\begin{aligned}
& \partial_t v_\gamma(z)+(F_{i, \vep})_\ast(z, v_\gamma(z), \nabla v_\gamma(z), \nabla^2v_\gamma (z))\\
& \leq  \partial_t v_\gamma (z)+ (F_i)_\ast(z, v_\gamma (z), \nabla v_\gamma(z), \nabla^2v_\gamma (z))-\vep\leq -{\vep\over 2}
\end{aligned}
\] 
for any $z\in B_\delta(y_0)\times (t_0-\delta, t_0+\delta)$. 

We have shown that $v_\gamma$ is a subsolution of \eqref{perturb eq} in $B_\delta(y_0)\times (t_0-\delta, t_0)$.  Noticing that
\[
u_{i, \vep}\geq 0\geq  v_\gamma \quad \text{on $\left(\ol{B_\delta(y_0)}\times\{t_0-\delta\}\right)\cup \left(\partial B_\delta(y_0)\times (t_0-\delta, t_0+\delta)\right)$},
\]  
by comparison principle we have 
\[
u_{i, \vep}\geq v_\gamma \quad \text{in $\ol{B_\delta(y_0)}\times [t_0-\delta, t_0+\delta]$, }
\] 
which implies that 
\[
u_{i, \vep}\left(x_0+\rho {y_0-x_0\over |y_0-x_0|}, t_0\right)\geq v_\gamma\left(x_0+\rho {y_0-x_0\over |y_0-x_0|}, t_0\right)\geq \rho \gamma |x_0-y_0| e^{-\gamma|x_0-y_0|^2} +o(\rho).
\]
We thus complete the proof of \eqref{eq hopf} for any supersolution $u_{i, \vep}$ of \eqref{perturb eq}. 

\begin{thebibliography}{10}

\bibitem{ALL}
O.~Alvarez, J.-M. Lasry, and P.-L. Lions.
\newblock Convex viscosity solutions and state constraints.
\newblock {\em J. Math. Pures Appl.}, 76:265--288, 1997.

\bibitem{Att}
A.~Attouchi.
\newblock Local regularity for quasi-linear parabolic equations in
  non-divergence form.
\newblock {\em preprint}, 2018.

\bibitem{Ba0}
G.~Barles.
\newblock A weak {B}ernstein method for fully nonlinear elliptic equations.
\newblock {\em Differential Integral Equations}, 4:241--262, 1991.

\bibitem{Ba3}
G.~Barles.
\newblock Local gradient estimates for second-order nonlinear elliptic and
  parabolic equations by the weak {B}ernstein's method.
\newblock {\em preprint}, 2017.

\bibitem{BaSo}
G.~Barles and P.~E. Souganidis.
\newblock Space-time periodic solutions and long-time behavior of solutions to
  quasi-linear parabolic equations.
\newblock {\em SIAM J. Math. Anal.}, 32:1311--1323, 2001.

\bibitem{BianGu}
B.~Bian and P.~Guan.
\newblock A microscopic convexity principle for nonlinear partial differential
  equations.
\newblock {\em Invent. Math.}, 177:307--335, 2009.

\bibitem{BiSa}
 C. Bianchini and P. Salani.
 \newblock Concavity properties for elliptic free boundary problems. 
 \newblock{\em Nonlinear Anal.}, 71:4461--4470, 2009; 
 \newblock Corrigendum  {\em Nonlinear Anal.}, 72:3551, 2010.
 
\bibitem{CLN}
L.~Caffarelli, Y.~Li, and L.~Nirenberg.
\newblock Some remarks on singular solutions of nonlinear elliptic equations
  {III}: viscosity solutions including parabolic operators.
\newblock {\em Comm. Pure Appl. Math.}, 66:109--143, 2013.

\bibitem{Co}
A. Colesanti. 
\newblock Brunn-Minkowski
inequalities for variational functionals and related problems.
\newblock {\em Adv. Math.}, 194:105--140, 2005.

\bibitem{CoCu}
A. Colesanti and P. Cuoghi.
\newblock The Brunn-Minkowski inequality for the n-dimensional logarithmic capacity of convex bodies. 
\newblock{\em Potential Anal.}, 22:289-304, 2005.


\bibitem{CoCuSa}
A. Colesanti, P. Cuoghi, and P. Salani. 
\newblock{Brunn-Minkowski inequalities for two functionals involving
    the $p$-Laplace operator.}
   \newblock {\em Appl. Anal.}, 85:45-66, 2006. 


\bibitem{CoSa}
A. Colesanti and P. Salani. 
\newblock{The Brunn-Minkowski inequality for $p$-capacity of convex bodies.}
\newblock{\em Math. Ann.}, 327:459-479, 2003.


\bibitem{CIL}
M.~G. Crandall, H.~Ishii, and P.-L. Lions.
\newblock User's guide to viscosity solutions of second order partial
  differential equations.
\newblock {\em Bull. Amer. Math. Soc.}, 27:1--67, 1992.

\bibitem{CraFra} 
G. Crasta and I. Fragal\`a.
\newblock{The Brunn-Minkowski inequality for the principal eigenvalue of fully nonlinear homogeneous elliptic operators}
\newblock{\em preprint 2019}.

\bibitem{DaL}
F.~Da~Lio.
\newblock Remarks on the strong maximum principle for viscosity solutions to
  fully nonlinear parabolic equations.
\newblock {\em Commun. Pure Appl. Anal.}, 3:395--415, 2004.

\bibitem{Doe}
K.~Does.
\newblock An evolution equation involving the normalized {$p$}-{L}aplacian.
\newblock {\em Commun. Pure Appl. Anal.}, 10:361--396, 2011.

\bibitem{ES1}
L.~C. Evans and J.~Spruck.
\newblock Motion of level sets by mean curvature. {I}.
\newblock {\em J. Differential Geom.}, 33:635--681, 1991.

\bibitem{Gbook}
Y.~Giga.
\newblock {\em Surface evolution equations. A level set approach}, {\em Monographs in
  Mathematics}, vol. 99.
\newblock Birkh\"auser Verlag, Basel, 2006.


\bibitem{GGIS}
Y.~Giga, S.~Goto, H.~Ishii, and M.-H. Sato.
\newblock Comparison principle and convexity preserving properties for singular
  degenerate parabolic equations on unbounded domains.
\newblock {\em Indiana Univ. Math. J.}, 40:443--470, 1991.

\bibitem{Gri}
G.~Gripenberg.
\newblock On the strong maximum principle for degenerate parabolic equations.
\newblock {\em J. Differential Equations}, 242:72--85, 2007.

\bibitem{HLP}
G.~H. Hardy, J.~E. Littlewood, and G.~P\'{o}lya, 
\newblock {\em Inequalities}, 
\newblock Cambridge University Press, Cambridge, 1959.

\bibitem{ILS2} K.~Ishige, Q. Liu, and P.~Salani.
\newblock Concavity principles for solutions of elliptic and parabolic boundary value problems.
\newblock In preparation.

\bibitem{INS1} K. Ishige, K. Nakagawa, and P. Salani.
\newblock Power concavity in weakly coupled elliptic and parabolic systems. 
\newblock {\em Nonlinear Analysis}, 131:81--97, 2016.

\bibitem{INS2} K. Ishige, K. Nakagawa, and P. Salani.
\newblock Spatial concavity of solutions to parabolic systems.
\newblock Preprint 2018, to appear in {\em Ann. Scuola Norm. Sup. Pisa Cl. Sci.}

\bibitem{IshSa0}
K.~Ishige and P.~Salani.
\newblock Is quasi-concavity preserved by heat flow?
\newblock {\em Arch. Math.}, 90:450--460, 2008.

\bibitem{IshSa0.5}
K.~Ishige and P.~Salani.
\newblock Convexity breaking of the free boundary for porous medium equations. 
\newblock {\em Interfaces Free Bound.}, 12:75--84, 2010.

\bibitem{IshSa1}
K.~Ishige and P.~Salani.
\newblock On a new Kind of convexity for solutions of parabolic problems.
\newblock {\em Discrete Contin. Dyn. Syst. Ser. S}, 4:851--864, 2011.

\bibitem{IshSa2.1}
K.~Ishige and P.~Salani.
\newblock A note on parabolic power concavity.
\newblock {\em Kodai Math. J.}, 37:668--679, 2014.

\bibitem{IshSa2}
K.~Ishige and P.~Salani.
\newblock Parabolic power concavity and parabolic boundary value problems.
\newblock {\em Math. Ann.}, 358:1091--1117, 2014.


\bibitem{IshSa3}
K.~Ishige and P.~Salani.
\newblock Parabolic {M}inkowski convolutions of solutions to parabolic boundary
  value problems.
\newblock {\em Adv. Math.}, 287:640--673, 2016.

\bibitem{I1}
H.~Ishii.
\newblock Perron's method for {H}amilton-{J}acobi equations.
\newblock {\em Duke Math. J.}, 55(2):369--384, 1987.

\bibitem{JiSi}
T.~Jin and L.~Silvestre.
\newblock H\"{o}lder gradient estimates for parabolic homogeneous
  {$p$}-{L}aplacian equations.
\newblock {\em J. Math. Pures Appl.}, 108:63--87, 2017.

\bibitem{Ju}
P.~Juutinen.
\newblock Concavity maximum principle for viscosity solutions of singular
  equations.
\newblock {\em NoDEA Nonlinear Differential Equations Appl.}, 17:601--618,
  2010.

\bibitem{JuKa}
P.~Juutinen and B.~Kawohl.
\newblock On the evolution governed by the infinity {L}aplacian.
\newblock {\em Math. Ann.}, 335:819--851, 2006.

\bibitem{KaBook}
B.~Kawohl.
\newblock {\em Rearrangements and convexity of level sets in {PDE}}, 
volume 1150 of {\em Lecture Notes in Mathematics}.
\newblock Springer-Verlag, Berlin, 1985.

\bibitem{KK2}
B.~Kawohl and N.~Kutev.
\newblock Comparison principle and {L}ipschitz regularity for viscosity
  solutions of some classes of nonlinear partial differential equations.
\newblock {\em Funkcial. Ekvac.}, 43:241--253, 2000.

\bibitem{Ke}
A.~U. Kennington.
\newblock Power concavity and boundary value problems.
\newblock {\em Indiana Univ. Math. J.}, 34:687--704, 1985.

\bibitem{Kor}
N.~J. Korevaar.
\newblock Convex solutions to nonlinear elliptic and parabolic boundary value problems.
\newblock {\em Indiana Univ. Math. J.}, 32:603--614, 1983.

\bibitem{LiuMaXu}
P. Liu, X.-N. Ma, and L. Xu.
\newblock A Brunn--Minkowski inequality for the Hessian eigenvalue in three-dimensional convex domain. 
\newblock{\em Adv. Math.}, 225:1616--1633, 2010.

\bibitem{LSZ}
Q.~Liu, A.~Schikorra, and X.~Zhou.
\newblock A game-theoretic proof of convexity preserving properties for motion by curvature.
\newblock {\em Indiana Univ. Math. J.}, 65:171--197, 2016.

\bibitem{PRu}
M.~Parviainen and E.~Ruosteenoja.
\newblock Local regularity for time-dependent tug-of-war games with varying probabilities.
\newblock {\em J. Differential Equations}, 261:1357--1398, 2016.

\bibitem{salanima}
P. Salani.
\newblock{A Brunn-Minkowski inequality for the Monge-Amp\`ere eigenvalue.}
\newblock {\em Adv. Math.}, 194:67--86, 2005.

\bibitem{Salani}
P. Salani.
\newblock{Convexity of solutions and Brunn-Minkowski inequalities for Hessian equations in $\R^3$.} 
\newblock{\em Adv. Math.}, 229:1924--1948, 2012.


\bibitem{Salani1}
P.~Salani.
\newblock Combination and mean width rearrangements of solutions of elliptic equations in convex sets.
\newblock{\em Ann. Inst. H. Poincar\'{e} Anal. Non Lin\'{e}aire}, 32:763--783, 2015.

\bibitem{Wang1}
L.~Wang.
\newblock On the regularity theory of fully nonlinear parabolic equations. {I}.
\newblock {\em Comm. Pure Appl. Math.}, 45:27--76, 1992.

\bibitem{Wang2}
L.~Wang.
\newblock On the regularity theory of fully nonlinear parabolic equations.
  {II}.
\newblock {\em Comm. Pure Appl. Math.}, 45:141--178, 1992.

\end{thebibliography}

\end{document}